\DeclareMathOperator{\soc}{soc}
\DeclareMathOperator{\alt}{Alt}
\newcommand{\Om}{\Theta}
\newcommand{\La}{\Omega}
\newtheorem{thm}{Theorem}
\newtheorem{cor}[thm]{Corollary}
 \newtheorem{lemma}[thm]{Lemma}
\newtheorem{prop}[thm]{Proposition}
\newtheorem{remark}[thm]{Remark}
\numberwithin{equation}{section}
\renewcommand{\footnote}{\endnote}
\newcommand{\ignore}[1]{}\makeglossary
\begin{document}
	\bibliographystyle{amsplain}
\subjclass[2020]{20E18, 20F19, 05C25} 

\keywords{groups generation; probability; pronilpotent groups; prosoluble groups}
\title[Probabilistic properties of profinite groups]{Probabilistic properties of profinite groups}
\author{Eloisa Detomi}
\address{Eloisa Detomi\\ Universit\`a di Padova\\  Dipartimento di Matematica \lq\lq Tullio Levi-Civita\rq\rq\\ Via Trieste 63, 35121 Padova, Italy\\email: eloisa.detomi@unipd.it}
\author{Andrea Lucchini}
\address{Andrea Lucchini\\ Universit\`a di Padova\\  Dipartimento di Matematica \lq\lq Tullio Levi-Civita\rq\rq\\ Via Trieste 63, 35121 Padova, Italy\\email: lucchini@math.unipd.it}
\author{Marta Morigi}
\address{Marta Morigi\\ Universit\`a di Bologna\\  Dipartimento di Matematica \\Piazza di Porta San Donato 5, 40126 Bologna, Italy\\email: marta.morigi@unibo.it}
\author{Pavel Shumyatsky}
\address{Pavel Shumyatsky\\ University of Brasilia
Department of Mathematics\\  Brasilia-DF, 70910-900, Brazil\\e-mail: pavel@unb.br}
\thanks{The first three authors are members of GNSAGA (INDAM).  The fourth author was partially supported by FAPDF and CNPq.}
\begin{abstract} Let $\mathfrak C$ be a class of finite groups which is closed for subgroups, quotients and direct products. Given a profinite group $G$  and an element $x\in G$, we denote by $P_{\mathfrak{C}}(x,G)$ the probability that $x$ and a randomly chosen element of $G$  generate a  pro-${\mathfrak C}$ subgroup. We say that a profinite group $G$ is $\mathfrak C$-positive if $P_{\mathfrak{C}}(x,G)>0$ for all $x \in G.$ 
We establish several equivalent conditions for a profinite group to be  $\mathfrak C$-positive when  $\mathfrak C$ is the class of finite soluble groups or of finite nilpotent groups. In particular, for the above classes, the profinite  $\mathfrak C$-positive  groups are virtually prosoluble (resp., virtually nilpotent). We also draw some consequences on the prosoluble (resp. pronilpotent) graph of a profinite group.
\end{abstract}
\maketitle

\hbox{}

\section{Introduction}
Let $\mathfrak C$ be a class of finite groups which is closed for subgroups, quotients and direct products. Given a profinite group $G$  and an element $x\in G$, we are interested in the probability that a randomly chosen element of $G$  generates a  pro-${\mathfrak C}$ subgroup together with $x$. 

 We denote by
$\Omega_{\mathfrak C}(x,G)$ the subset of $G$ consisting of elements $g\in G$ with the property that $\langle x,g\rangle$ is a pro-${\mathfrak C}$ 
 group  (in Section \ref{sec:2} we will show that this set is closed). 
Let $\mu$ be the normalized 
 Haar measure on $G$. Then the probability that a randomly chosen element of $G$  generates a  pro-${\mathfrak C}$ subgroup together with $x$ is $P_{\mathfrak{C}}(x,G)=\mu(\Omega_{\mathfrak C}(x,G))$. We may also define $$\Omega_{\mathfrak C}(G)=\bigcap_{x\in G}\Omega_{\mathfrak C}(x,G)$$
and compute $\mu(\Omega_{\mathfrak C}(G)).$

 We say that a profinite group $G$ is $\mathfrak C$-positive if $P_{\mathfrak{C}}(x,G)>0$ for all $x \in G.$ Moreover we say that $G$ is $\mathfrak C$-bounded-positive if there exists a positive constant $\eta$ such that $P_{\mathfrak{C}}(x,G)>\eta$ for all $x \in G.$

Note that if $\mathfrak A$ is the class of the finite abelian groups, then 
 $\Omega_{\mathfrak A}(x,G)=C_G(x)$, $P_{\mathfrak{C}}(x,G)=|G:C_G(x)|^{-1}$, $\Omega_{\mathfrak A}(G)=Z(G)$ and $\mu(\Omega_{\mathfrak A}(G))=|G:Z(G)|^{-1}.$
In particular a profinite group $G$ is $\mathfrak A$-positive if and only if it is an FC-group and is $\mathfrak A$-bounded-positive if and only if it is a BFC-group. It follows from \cite[Lemma 2.5]{as} that the following are equivalent:
\begin{enumerate}
	\item $G$ is $\mathfrak A$-positive;
	\item $G$ is is $\mathfrak A$-bounded-positive;
	\item $\mu(\Omega_{\mathfrak A}(G)) > 0.$
\end{enumerate}
This suggests to compare the properties that $G$ is $\mathfrak C$-positive, $G$ is $\mathfrak C$-bounded-positive and $\mu(\Omega_{\mathfrak C}(G)) > 0$
for other choices of $\mathfrak C.$

Denote by $P_{\mathfrak C}(G,G)$ the probability that two randomly chosen elements of a profinite group $G$ generate a pro-$\mathfrak C$ subgroup. A crucial result in approaching our problem is the following consequence of the Baire category theorem.

\begin{thm}\label{ovvissimo}
Let $G$ be a profinite group. If $G$ is  $\mathfrak C$-positive, then $P_{\mathfrak C}(G,G)>0$.
\end{thm}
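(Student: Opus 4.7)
The plan is to show that $x \mapsto P_{\mathfrak{C}}(x,G)$ is upper semicontinuous, apply the Baire category theorem to the compact Hausdorff space $G$ to produce a closed set of positive Haar measure on which this function is bounded below, and conclude by Fubini applied to the (closed, hence measurable) subset $\{(x,g) \in G \times G : \la x,g\ra \text{ is pro-}\mathfrak{C}\}$ of $G\times G$.

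For each open normal subgroup $N\nn G$, put $\Omega_N(x):=\{g\in G : \la xN,gN\ra\in\mathfrak{C}\}$. Since $\mathfrak{C}$ is closed under subgroups and quotients, a closed subgroup of $G$ is pro-$\mathfrak{C}$ iff each of its projections to the finite quotients $G/N$ lies in $\mathfrak{C}$, and therefore
$$
\Omega_{\mathfrak{C}}(x,G)=\bigcap_N \Omega_N(x),
$$
a downward directed intersection. Each $\Omega_N(x)$ is a union of cosets of $N$, hence clopen, and $f_N(x):=\mu(\Omega_N(x))$ depends only on the coset $xN$, so $f_N$ is locally constant, in particular continuous, on $G$. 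By regularity of the Haar measure along the downward directed family of closed sets $(\Omega_N(x))_N$ (with empty intersection after subtracting any open neighbourhood of $\Omega_{\mathfrak{C}}(x,G)$, one obtains a finite subcover argument),
$$
\varphi(x):=P_{\mathfrak{C}}(x,G)=\mu(\Omega_{\mathfrak{C}}(x,G))=\inf_N f_N(x).
$$
As an infimum of continuous functions $\varphi$ is upper semicontinuous, so each set $A_n:=\{x\in G : \varphi(x)\geq 1/n\}$ is closed in $G$.

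By the $\mathfrak{C}$-positivity hypothesis $\varphi(x)>0$ for every $x$, whence $G=\bigcup_{n\geq 1} A_n$. Since $G$ is compact Hausdorff it is a Baire space, so some $A_n$ has non-empty interior; as every non-empty open subset of a compact group has positive Haar measure (being covered by finitely many translates of $G$), one gets $\mu(A_n)>0$. Fubini then yields
$$
P_{\mathfrak{C}}(G,G)=\int_G\varphi(x)\, d\mu(x)\;\geq\;\frac{1}{n}\mu(A_n)\;>\;0.
$$
The main obstacle is verifying the upper semicontinuity of $\varphi$: one must realise $P_{\mathfrak{C}}(\cdot,G)$ as a directed infimum of locally constant functions indexed by open normal subgroups, and take care with the measure-theoretic passage to the limit when $G$ is not metrisable. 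Once this is in place, the Baire category theorem is precisely the mechanism that upgrades pointwise positivity of $\varphi$ to positivity on a set of positive measure, and Fubini does the rest.
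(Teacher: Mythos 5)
Your proposal is correct and follows essentially the same route as the paper: you realise $P_{\mathfrak C}(\cdot,G)$ as a directed infimum of locally constant functions over the open normal subgroups (the paper's Lemma \ref{lem:inf} and Corollary \ref{cor:inf}), deduce that the level sets $A_n$ are closed, invoke the Baire category theorem to find one with non-empty interior, and conclude via Fubini. The only cosmetic difference is that you integrate $\varphi$ over all of $G$ in one step, whereas the paper first bounds $P_{\mathfrak C}(xN,G)$ on a single coset (Lemma \ref{unotanti}) and then averages over the finitely many cosets of $N$ (Lemma \ref{unionefinita}); the underlying computation is the same.
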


The probabilities  that two randomly chosen elements of a finite group generate a soluble (respectively, nilpotent) subgroup have  been studied by J.S. Wilson in \cite{swil} and \cite{nwil}, respectively. The main theorems therein have as a direct consequence the following results on profinite groups (see \cite{nwil}).  Denote by $P_{\mathfrak S}(G,G)$ and $P_{\mathfrak N}(G,G)$ the probabilities that two randomly chosen elements of a profinite group $G$ generate a prosoluble (respectively, pronilpotent) subgroup; then $G$  is virtually prosoluble if and only if  $P_{\mathfrak S}(G,G)>0$ and it is virtually pronilpotent if and only if  $P_{\mathfrak N}(G,G)>0$.

If $G$ is a finite group, then $\Omega_{\mathfrak S}(x,G)$ is the so called \lq solubilizer\rq \ of $x$ in $G.$
In general it is not a subgroup, however when $G$ is a finite group, it follows from \cite[Theorem 1.1]{gu} that the intersection $\Omega_{\mathfrak S}(G)$ of the solubilizers coincides with the soluble radical $R(G)$ of $G.$
This implies that if $G$ is an arbitrary profinite group, then $\Omega_{\mathfrak S}(G)$ is the prosoluble radical of $G$, i.e. the largest normal prosoluble subgroup of $G$ (see \cite[Proposition 1.9]{hl}). We will use the symbol $R(G)$ also for the prosoluble radical of a profinite group $G.$
Combining Theorem \ref{ovvissimo} with Wilson's result, we immediately obtain the following:

\begin{thm}\label{soluble}
	Let $G$ be a profinite group and let $\mathfrak S$ be the class of the finite soluble groups. The following are equivalent:
	\begin{enumerate}
		\item $G$ is $\mathfrak S$-positive;
		\item $G$ is  $\mathfrak S$-bounded-positive;
			\item $P_{\mathfrak S}(G,G)>0 $; 
			\item $G$ is virtually prosoluble;
		\item $\mu(\Omega_{\mathfrak S}(G))=|G:R(G)|^{-1}>0.$ 
	\end{enumerate}
\end{thm}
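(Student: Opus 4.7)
The plan is to organize the five conditions around a short cycle together with two side implications. The implication $(2)\Rightarrow(1)$ is immediate from the definitions, $(1)\Rightarrow(3)$ is exactly Theorem~\ref{ovvissimo} applied with $\mathfrak C=\mathfrak S$, and $(3)\Rightarrow(4)$ is Wilson's theorem \cite{swil}, already recorded in the introduction. So the work reduces to deriving $(2)$ and $(5)$ from $(4)$, and recovering $(4)$ from $(5)$.

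For $(4)\Rightarrow(2)$ and $(4)\Rightarrow(5)$, suppose $G$ is virtually prosoluble, so $R(G)$ has finite index in $G$ and hence $\mu(R(G))=[G:R(G)]^{-1}>0$. Since the introduction already identifies $\Omega_{\mathfrak S}(G)=R(G)$ for any profinite group, condition $(5)$ is automatic. For $(2)$ the claim I would prove is that $R(G)\subseteq \Omega_{\mathfrak S}(x,G)$ for every $x\in G$, which then forces $P_{\mathfrak S}(x,G)\ge [G:R(G)]^{-1}$ uniformly in $x$. To see the inclusion, fix $x\in G$ and $g\in R(G)$ and set $H=\overline{\langle x,g\rangle}$. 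Then $H\cap R(G)$ is a closed subgroup of the prosoluble group $R(G)$ and hence prosoluble, while $H/(H\cap R(G))\cong HR(G)/R(G)$ is topologically generated by the single coset $xR(G)$, hence procyclic and in particular soluble; therefore $H$ is prosoluble.

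For $(5)\Rightarrow(4)$, I would invoke the general fact that if a closed subgroup $K$ of a profinite group $G$ satisfies $\mu(K)>0$, then the disjoint cosets of $K$ in $G$ force $[G:K]\le 1/\mu(K)<\infty$, and a closed subgroup of finite index in a profinite group is automatically open; applied to $K=R(G)$ this gives $(4)$. The only substantive point beyond bookkeeping is the stability argument $R(G)\subseteq\Omega_{\mathfrak S}(x,G)$, which is really a one-line consequence of the closure of $\mathfrak S$ under extensions with cyclic quotient; everything else is a rearrangement of Theorem~\ref{ovvissimo}, Wilson's theorem, and standard Haar-measure facts for profinite groups, so I do not expect a genuine obstacle.
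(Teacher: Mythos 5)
Your proposal is correct and follows essentially the same route as the paper: the cycle $(1)\Rightarrow(3)\Rightarrow(4)\Rightarrow(2)\Rightarrow(1)$ via Theorem~\ref{ovvissimo} and Wilson's theorem, together with the equivalence $(4)\Leftrightarrow(5)$ coming from $\Omega_{\mathfrak S}(G)=R(G)$. The only (harmless) difference is that you spell out why $R(G)\subseteq\Omega_{\mathfrak S}(x,G)$ via the prosoluble-by-procyclic extension argument, a point the paper asserts in one line.
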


The situation with the class $\mathfrak N$ of nilpotent groups is more complicated. The condition of being  $\mathfrak N$-bounded-positive is stronger than the requirement that $P_{\mathfrak N}(G,G)>0$, as the following example shows. Given an odd prime $p,$ consider the semidirect product $G=\mathbb Z_p \rtimes \langle x\rangle,$ where $\mathbb Z_p$ is the group of the $p$-adic integers, $|x|=2$ and $z^x=-z$ for every $z\in \mathbb Z_p.$ Although $G$ is virtually pronilpotent, $\Omega_{\mathfrak N}(x,G)=\langle x \rangle$ and $P_{\mathfrak N}(x,G)=0.$   If $G$ is a finite group, then $\Omega_{\mathfrak N}(x,G)$ is the so called \lq nilpotentizer\rq \ of $x$ in $G.$
In general it is not a subgroup, but the intersection $\Omega_{\mathfrak N}(G)$ of the nilpotentizers coincides with the hypercenter $Z_\infty(G)$ of $G$ (see \cite[Proposition 2.1]{az}). More generally, when $G$ is a profinite group, $\Omega_{\mathfrak N}(G)$ is the hypercenter $Z_\infty(G)$ defined as the set of all $x \in G$ such that $xN \in Z_\infty(G/N)$ for every open normal subgroup $N$ of $G$ (see \cite{hyper}).

We will establish the following theorem.

\begin{thm}\label{nilpot}
	Let $G$ be a profinite group and let $\mathfrak N$ be the class of the finite nilpotent groups. The following are equivalent:
	\begin{enumerate}
    \item $G$ is $\mathfrak N$-positive;
		\item $G$ is  $\mathfrak N$-bounded-positive;
		\item $Z_\infty(G)$ is open in $G$ and $\mu(\Omega_{\mathfrak N}(G))=|G:Z_\infty(G)|^{-1}>0.$
	\item 	$G$ is finite-by-pronilpotent.
		\end{enumerate}
\end{thm}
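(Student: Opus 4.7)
The plan is to prove the cycle $(4)\Rightarrow(3)\Rightarrow(2)\Rightarrow(1)\Rightarrow(4)$. The implications $(2)\Rightarrow(1)$ and $(3)\Rightarrow(2)$ are essentially formal: the former is trivial, and for the latter the identification $\Omega_{\mathfrak N}(G)=Z_\infty(G)$ recalled in the excerpt gives $Z_\infty(G)\subseteq\Omega_{\mathfrak N}(x,G)$ for every $x\in G$, so openness of $Z_\infty(G)$ forces $P_{\mathfrak N}(x,G)\ge|G:Z_\infty(G)|^{-1}>0$ uniformly in $x$.

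For $(4)\Rightarrow(3)$ I would argue as follows. Let $R=G^{\mathfrak N}$ be finite, and use the fact that open normal subgroups of $G$ separate points to pick $N_0\unlhd G$ open with $N_0\cap R=1$; normality of $N_0$ and $R$ with trivial intersection forces $[N_0,R]\le N_0\cap R=1$, so $N_0\le C_G(R)$. The claim is that $N_0\subseteq Z_\infty(G)$. Fix $n\in N_0$, $g\in G$ and set $S=\langle n,g\rangle\cap R$. Since $S\cap N_0\le R\cap N_0=1$ and $\langle n,g\rangle/(\langle n,g\rangle\cap N_0)$ is procyclic (generated by the image of $g$), $S$ injects into a procyclic quotient and hence is itself procyclic. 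Pick a topological generator $s_0$ of $S$ and write $s_0=n_0 g^j$ with $n_0\in\langle n,g\rangle\cap N_0$ and $j\in\widehat{\mathbb Z}$. A direct computation gives $gs_0g^{-1}s_0^{-1}=n_0^g n_0^{-1}\in N_0$; at the same time this element lies in $R$ as a product of elements of $R$, hence in $N_0\cap R=1$. Thus $g$ centralizes $s_0$ and so $S\le Z(\langle n,g\rangle)$. Since $\langle n,g\rangle/S$ embeds in the pronilpotent group $G/R$, the group $\langle n,g\rangle$ is central-by-pronilpotent, and a standard Hall-type argument on finite continuous quotients shows it is pronilpotent. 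Hence $n\in\Omega_{\mathfrak N}(G)=Z_\infty(G)$, so $Z_\infty(G)$ contains the open subgroup $N_0$ and $\mu(\Omega_{\mathfrak N}(G))=|G:Z_\infty(G)|^{-1}>0$.

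For $(1)\Rightarrow(4)$, Theorem~\ref{ovvissimo} combined with Wilson's theorem from \cite{nwil} gives that $G$ is virtually pronilpotent; fix an open normal pronilpotent subgroup $H\unlhd G$, with Sylow pro-$q$ decomposition $H=\prod_q H_q$ whose factors are $G$-invariant. The remaining task---upgrading virtually pronilpotent to finite-by-pronilpotent---I would attack by contradiction: assume $R=G^{\mathfrak N}$ is infinite and exhibit an element $x\in G$ with $P_{\mathfrak N}(x,G)=0$, mimicking the template example $\mathbb Z_p\rtimes\langle x\rangle$ from the excerpt. The strategy is to locate a prime $q$ and a coset $xH$ such that the conjugation action of $x$ on $H_q$ has infinite-index fixed-point set, and then to run a coset-by-coset analysis of $\Omega_{\mathfrak N}(x,G)$ showing it is contained in a bounded union of translates of $C_{H_q}(x)$, hence has zero Haar measure.

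The main obstacle will be this last step. The delicate point is to translate the global hypothesis ``$R$ is infinite'' into the pointwise witness ``some $x$ has null nilpotentizer'' while controlling uniformly, across cosets of $H$ in $G$ and across primes $q$, how $\langle x,g\rangle$ can fail to be pronilpotent; this is likely to require a quantitative sharpening of the counting argument underlying Wilson's theorem. In contrast, the step $(4)\Rightarrow(3)$ reduces to the clean structural argument above, and the remaining implications are essentially formal.
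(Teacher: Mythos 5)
Your implications $(4)\Rightarrow(3)\Rightarrow(2)\Rightarrow(1)$ are correct; in fact your argument for $(4)\Rightarrow(3)$ carefully fills in a step that the paper dispatches with ``it is not difficult to check'' (the paper phrases it with an open normal $N$ satisfying $N\cap\gamma_\infty(G)=1$, but the content --- such an $N$ centralizes the finite residual and therefore lies in the hypercenter, since $\langle n,g\rangle$ is central-by-pronilpotent for every $g$ --- is the same as yours). The problem is $(1)\Rightarrow(4)$, which is the substantive content of the theorem and which you have not proved. After invoking Theorem \ref{ovvissimo} and Wilson's theorem to obtain an open pronilpotent normal subgroup, you propose to argue by contradiction and ``locate a prime $q$ and a coset $xH$ such that the conjugation action of $x$ on $H_q$ has infinite-index fixed-point set,'' but you give no argument for why such an $x$ and $q$ must exist when $G$ fails to be finite-by-pronilpotent, nor for the claimed containment of $\Omega_{\mathfrak N}(x,G)$ in finitely many translates of $C_{H_q}(x)$. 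You explicitly flag this as ``the main obstacle,'' so at this point the proposal is a strategy outline rather than a proof.

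For comparison, the paper closes exactly this gap by proving $(1)\Rightarrow(3)$ directly, using two ingredients absent from your sketch. First, Schmid's theorem that the hypercenter of a profinite group is the intersection of the normalizers of its Sylow subgroups, which reduces the task to showing that $G$ has only finitely many Sylow $p$-subgroups for each of the finitely many primes $p$ dividing $|G:F|$ (after factoring out the normal Sylow $p$-subgroup of $F$, one may assume a Sylow $p$-subgroup $T$ of $G$ is finite). Second, a counting lemma (Lemma \ref{qnhall} and its profinite version Corollary \ref{modw}): if $Q$ is an open normal pronilpotent $\pi$-subgroup and $\langle u,v\rangle$ is pronilpotent with $\pi'$-Hall subgroup $R$, then $P_{\mathfrak N}(uQ,vQ)\le |Q:C_Q(R)|^{-1}$. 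A second application of the Baire category theorem --- this time to the coset $zF$ for a fixed $p$-element $z\in T$ --- produces a pair of cosets of $F$ on which the probability is bounded below by some $\eta>0$, and the counting lemma then converts this into $|G:C_G(z)|\le |G:F|/\eta<\infty$, whence finitely many $p$-elements and finitely many Sylow $p$-subgroups. Your sketch is in the same spirit (an infinite-index centralizer should force measure zero), but without the Sylow-normalizer characterization you have no mechanism to produce the witness $x$, and without the Hall-subgroup counting lemma you have no quantitative link between $P_{\mathfrak N}(x,G)$ and the index of a centralizer; any completion of your $(1)\Rightarrow(4)$ would essentially have to reconstruct both.
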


Given a class $\mathfrak C$ of finite groups, to any profinite group $G$ we may associate a graph $\Gamma_{\mathfrak C}(G)$ which is defined as follows: the vertices of $\Gamma_{\mathfrak C}(G)$ are the elements of $G\setminus \Omega_{\mathfrak C}(G)$ and two distinct vertices $g_1, g_2$ are adjacent if $\langle g_1, g_2\rangle$ is a pro-${\mathfrak C}$ subgroup of $G.$

 In the particular case when $\mathfrak A$ is the class of finite abelian groups, the vertices of $\Gamma_{\mathfrak A}(G)$ are the non-central elements of $G$ and 
 two distinct vertices are adjacent if and only if they commute in $G$. The graph $\Gamma_{\mathfrak A}(G)$ is known with the name of commuting graph of $G$. Commuting graphs arise naturally in many different contexts and they have been intensively studied by various authors in recent years (see in particular   \cite{GP},  \cite{ij}, \cite{mp}, \cite{parker}, \cite{SS}). 
 
  The (pro)soluble graph  $\Gamma_{\mathfrak S}(G)$ has been studied for finite groups $G$ in \cite{almm} and \cite{BLN}, where it was proved that  $\Gamma_{\mathfrak C}(G)$ is always connected and its diameter is at most 5.  An attractive property of the prosoluble graph $\Gamma_{\mathfrak S}(G)$ of a profinite group $G$ is that $\Omega_{\mathfrak S}(G)=R(G)$ is a closed normal subgroup of $G$ and two vertices $g_1, g_2$ are adjacent in $\Gamma_{\mathfrak S}(G)$ if and only if $g_1R(G), g_2R(G)$ are adjacent in
$\Gamma_{\mathfrak S}(G/R(G)).$  In particular $\Gamma_{\mathfrak{S}}(G)$ is connected if and only if $\Gamma_{\mathfrak{S}}(G/R(G))$ is connected, and the graphs $\Gamma_{\mathfrak S}(G)$ and $\Gamma_{\mathfrak{S}}(G/R(G))$ have the same diameter (see \cite[Lemma 2.2]{BLN}). Thus, 
as a consequence of Theorem \ref{soluble}, we obtain the following.

\begin{cor}\label{graph}
Let $G$ be a profinite nonprosoluble group.  If ${P_{\mathfrak S}(g,G)} > 0$ for every $g\in G,$
then the prosoluble graph $\Gamma_{\mathcal{S}}(G)$ of $G$ is connected and its diameter is at most 5.
\end{cor}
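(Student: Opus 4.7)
The plan is to reduce the corollary to the finite case via the prosoluble radical $R(G)$ and then invoke the known results on finite groups. The hypothesis $P_{\mathfrak{S}}(g,G)>0$ for all $g\in G$ is precisely the statement that $G$ is $\mathfrak{S}$-positive, so Theorem \ref{soluble} applies. First I would use the equivalence (1)$\Leftrightarrow$(4) of Theorem \ref{soluble} to conclude that $G$ is virtually prosoluble, i.e. $|G:R(G)|<\infty$. Since $G$ is assumed not to be prosoluble, the finite quotient $G/R(G)$ is nontrivial, and in particular $\Gamma_{\mathfrak{S}}(G/R(G))$ is a nonempty graph.

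Next I would appeal to the results of \cite{almm} and \cite{BLN} on finite groups: for every finite group $H$, the soluble graph $\Gamma_{\mathfrak{S}}(H)$ is connected and has diameter at most $5$. Applying this to the finite group $H=G/R(G)$ yields that $\Gamma_{\mathfrak{S}}(G/R(G))$ is connected of diameter at most $5$.

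Finally I would transfer this conclusion from $G/R(G)$ back to $G$ using the observation (already recorded in the paragraph preceding the corollary, and proved in \cite[Lemma 2.2]{BLN}) that the vertex set of $\Gamma_{\mathfrak{S}}(G)$ is $G\setminus R(G)$ and that two vertices $g_1,g_2$ are adjacent in $\Gamma_{\mathfrak{S}}(G)$ if and only if their cosets $g_1R(G),g_2R(G)$ are adjacent in $\Gamma_{\mathfrak{S}}(G/R(G))$. This implies immediately that $\Gamma_{\mathfrak{S}}(G)$ is connected precisely when $\Gamma_{\mathfrak{S}}(G/R(G))$ is, and that the two graphs have the same diameter, whence $\diam \Gamma_{\mathfrak{S}}(G)\le 5$.

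Essentially no obstacle arises here: the corollary is a straightforward assembly of Theorem \ref{soluble}, the finite-group results of \cite{almm,BLN}, and the radical reduction from \cite[Lemma 2.2]{BLN}. The only point that requires a line of justification is why $\Gamma_{\mathfrak{S}}(G/R(G))$ is itself nonempty, which is exactly where the hypothesis that $G$ is not prosoluble is used.
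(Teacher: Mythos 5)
Your proposal is correct and follows exactly the route the paper intends: Theorem \ref{soluble} gives that $G$ is virtually prosoluble, the reduction modulo $R(G)$ from \cite[Lemma 2.2]{BLN} (recorded in the paragraph preceding the corollary) identifies $\Gamma_{\mathfrak S}(G)$ with $\Gamma_{\mathfrak S}(G/R(G))$ up to connectivity and diameter, and the finite-group results of \cite{almm,BLN} finish the argument. The paper gives no separate proof of this corollary beyond that same assembly, so there is nothing to add.
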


If $G$ is a finite group, then its nilpotent graph $\Gamma_{\mathfrak{N}}(G)$
is not always connected. 
 However each connected component of  $\Gamma_{\mathfrak{N}}(G)$  has diameter at most $10$ (see \cite[Proposition 7.6]{BLN}).
If $G$ is a profinite group, then $g_1, g_2$ are adjacent in $\Gamma_{\mathfrak N}(G)$ if and only if $g_1Z_\infty(G), g_2 Z\infty(G)$ are adjacent in
$\Gamma_{\mathfrak N}(G/Z_\infty(G)).$ Thus, 
as a consequence of Theorem \ref{nilpot}, we obtain the following.

\begin{cor}\label{graphn}
	Let $G$ be a profinite nonpronilpotent group.  If ${P_{\mathfrak N}(g,G)} > 0$ for every $g\in G,$
	then the pronilpotent graph $\Gamma_{\mathfrak{N}}(G)$ of $G$ has only finitely many connected components and each of these components has diameter  at most 10.
\end{cor}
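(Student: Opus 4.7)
The plan is to reduce Corollary \ref{graphn} to the finite-group statement \cite[Proposition 7.6]{BLN} by passing to the finite quotient $\overline G := G/Z_\infty(G)$, using Theorem \ref{nilpot} as the bridge.

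First, I would invoke Theorem \ref{nilpot}: the hypothesis $P_{\mathfrak N}(g,G) > 0$ for every $g \in G$ is precisely the assertion that $G$ is $\mathfrak N$-positive, so condition (3) of Theorem \ref{nilpot} gives that $Z_\infty(G)$ is open in $G$. Therefore $\overline G$ is finite. Since $G$ is not pronilpotent we have $\overline G \neq 1$, and in fact $Z_\infty(\overline G)$ is trivial (otherwise the hypercenter of $G$ would strictly contain $Z_\infty(G)$), so $\Gamma_{\mathfrak N}(\overline G)$ is a nonempty finite graph, to which \cite[Proposition 7.6]{BLN} applies and gives finitely many components, each of diameter at most $10$.

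Next I would transfer this information from $\overline G$ back to $G$. The paragraph preceding the corollary records that for $g_1 Z_\infty(G) \neq g_2 Z_\infty(G)$, the edge relation in $\Gamma_{\mathfrak N}(G)$ matches the edge relation in $\Gamma_{\mathfrak N}(\overline G)$. To conclude that the preimage of each component of $\Gamma_{\mathfrak N}(\overline G)$ forms a \emph{single} component of $\Gamma_{\mathfrak N}(G)$, I must also verify that any two distinct vertices $g_1, g_2$ lying in the same $Z_\infty(G)$-coset are adjacent in $\Gamma_{\mathfrak N}(G)$. Writing $g_2 = g_1 z$ with $z \in Z_\infty(G)$, one has $\langle g_1, g_2 \rangle = \langle g_1, z \rangle \leq \langle g_1 \rangle Z_\infty(G)$; I would check in each finite quotient $G/N$ that $Z_\infty(G) N/N \leq Z_\infty(G/N)$ is contained in the hypercenter of the finite subgroup $\langle g_1 N \rangle Z_\infty(G/N)$, whose further quotient is cyclic, forcing this subgroup to be nilpotent. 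This gives pronilpotency of $\langle g_1, g_2 \rangle$, hence the required edge in $\Gamma_{\mathfrak N}(G)$.

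Combining the two ingredients, the components of $\Gamma_{\mathfrak N}(G)$ are exactly the preimages under $G \to \overline G$ of the components of $\Gamma_{\mathfrak N}(\overline G)$, so there are only finitely many of them; and the distance in $\Gamma_{\mathfrak N}(G)$ between two vertices is bounded above by the distance between their images in $\Gamma_{\mathfrak N}(\overline G)$ (with the extra cost $0$ or $1$ absorbed by the same-coset argument when one or both endpoints share a coset with an intermediate vertex of a lifted path), yielding a diameter bound of $10$ on each component.

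The only nonroutine step I anticipate is the same-coset adjacency claim; the rest is the straightforward transfer through the quotient map guaranteed by the observation quoted before the corollary together with Theorem \ref{nilpot}.
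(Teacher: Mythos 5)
Your proof is correct and follows the same route the paper intends: use Theorem \ref{nilpot} to make $Z_\infty(G)$ open, transfer edges via the correspondence with $\Gamma_{\mathfrak N}(G/Z_\infty(G))$ stated just before the corollary, and apply \cite[Proposition 7.6]{BLN} to the finite quotient. The one step you single out as nonroutine, adjacency of distinct vertices in the same $Z_\infty(G)$-coset, is in fact immediate: writing $g_2=g_1z$ with $z\in Z_\infty(G)=\Omega_{\mathfrak N}(G)\subseteq\Omega_{\mathfrak N}(g_1,G)$ gives directly that $\langle g_1,g_2\rangle=\langle g_1,z\rangle$ is pronilpotent.
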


It is a difficult problem to determine whether there exists a profinite nonprosoluble group whose prosoluble graph is not connected. In the case of a finite insoluble group $G$, the connectivity of the soluble graph $\Gamma_{\mathcal{N}}(G)$ is strongly related with the following property of the solubilizers in $G$: for every $g\in G$, the solubilizer $\Omega_{\mathfrak S}(g,G)$ properly contains $\langle g \rangle$ (see \cite[Corollary 3.2]{almm}). In the profinite context an analogue of the previous statement should say that if $G$ is not prosoluble, then, for every $g \in G,$ the (closed) subgroup $ {\langle g \rangle}$ is properly contained in $\Omega_{\mathfrak S}(g,G).$ However in Section \ref{solva} we will prove that this is false. 
  Namely the following holds.
 \begin{prop}\label{prop:sol}  There exists a non-prosoluble profinite  group $G$ containing an element $g$ such that the solubilizer $\Omega_{\mathfrak S}(g,G)$ coincides with the (closed) subgroup generated by $g$    in $G.$
\end{prop}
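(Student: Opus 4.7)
The plan is to construct the required pair $(G,g)$ explicitly as an inverse limit of finite groups, arranged so that the solubilizer of $g$ in the profinite limit shrinks to $\overline{\langle g\rangle}$ even though the solubilizers at the finite levels are strictly larger.

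Before building the example, I would record the necessary constraints. If $h\in C_G(g)$, then $\overline{\langle g,h\rangle}$ is pro-abelian and hence prosoluble, so $C_G(g)\subseteq \Omega_{\mathfrak S}(g,G)$. Similarly, if $h$ normalizes $\overline{\langle g\rangle}$, then $\overline{\langle g,h\rangle}/\overline{\langle g\rangle}$ is procyclic and $\overline{\langle g\rangle}$ is pro-abelian, so $\overline{\langle g,h\rangle}$ is prosoluble; hence $N_G(\overline{\langle g\rangle})\subseteq \Omega_{\mathfrak S}(g,G)$ as well. The proposition therefore forces $\overline{\langle g\rangle}=C_G(g)=N_G(\overline{\langle g\rangle})$, and more generally requires $\overline{\langle g\rangle}$ to be a maximal closed prosoluble subgroup of $G$. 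None of these conditions can be achieved inside a single non-abelian finite simple group, where the normalizer of a cyclic subgroup is always strictly larger; the example must therefore exploit the profinite topology in an essential way.

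To meet these constraints I would build $G=\varprojlim G_n$ as an inverse limit of finite groups with surjective bonding maps $\pi_{n+1,n}\colon G_{n+1}\twoheadrightarrow G_n$ and a compatible sequence of elements $g_n\in G_n$. The key design principle is that, while at each finite level the local solubilizer $M_n:=\Omega_{\mathfrak S}(g_n,G_n)$ may strictly contain $\langle g_n\rangle$, no element of $M_n\setminus\langle g_n\rangle$ admits a lift to $G_{n+1}$ that remains in $M_{n+1}$. To force this, I would arrange the kernel $\ker\pi_{n+1,n}$ to involve non-abelian simple factors acted upon by $g_{n+1}$, so that the $\langle g_{n+1}\rangle$-orbit of any coset representative of an offending $h_n$ generates a subgroup of the kernel hitting the simple factors non-trivially, hence producing a non-soluble subgroup $\langle g_{n+1},h_{n+1}\rangle$ at level $n+1$. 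Under such a construction, an element $h=(h_n)_n\in G$ belongs to $\Omega_{\mathfrak S}(g,G)$ if and only if $\langle g_n,h_n\rangle$ is soluble in every $G_n$, and the non-lifting property forces $h_n\in\langle g_n\rangle$ for every $n$, i.e.\ $h\in\overline{\langle g\rangle}$. Moreover, $G$ is non-prosoluble because the non-abelian simple chief factors in the kernels produce non-soluble finite quotients.

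The technical heart of the proof, and its main obstacle, is the construction of the successive extensions $G_{n+1}\to G_n$ with the required non-lifting property. This is a representation-theoretic problem: at each stage one must choose an appropriate $g_n$-equivariant module structure on a direct product of non-abelian simple groups so that the conjugation action of a lift of $g_n$ on any "bad" coset produces a subgroup hitting the simple factors non-trivially. Carefully chosen permutation modules for the cyclic action of $g_n$ make this feasible, yielding the claimed counterexample.
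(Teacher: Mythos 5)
Your high-level strategy is the same as the paper's: build $G$ as an inverse limit $\varprojlim G_t$ with a compatible sequence $g_t$, arrange the kernels to be products of non-abelian simple groups, and prove a ``non-lifting'' property guaranteeing that the image of $\Omega_{\mathfrak S}(g_{t+1},G_{t+1})$ in $G_t$ is contained in $\langle g_t\rangle$, so that $\Omega_{\mathfrak S}(g,G)\subseteq\bigcap_t \langle g\rangle N_t=\overline{\langle g\rangle}$. Your preliminary observations (that $C_G(g)$ and $N_G(\overline{\langle g\rangle})$ always lie in the solubilizer, and that no finite group can work, by \cite[Theorem 1.2]{almm}) are correct and are a useful sanity check on what any example must look like.

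However, there is a genuine gap: the entire mathematical content of the proposition is the existence of the extensions with the non-lifting property, and you assert it rather than prove it. The sentence ``carefully chosen permutation modules for the cyclic action of $g_n$ make this feasible'' is exactly the step that needs an explicit construction and a verification, and it is not obvious --- one must defeat \emph{every} lift $h_{t+1}=mz$ of \emph{every} $z\notin\langle g_t\rangle$ simultaneously, for every choice of $m$ in the kernel. The paper does this concretely: $G_{t+1}=\alt(5)\wr G_t$ (regular action), and $g_{t+1}=m_{t+1}g_t$ where $m_{t+1}\in\alt(5)^{|G_t|}$ has coordinate $\alpha=(1,2,3)$ at the identity, $\beta=(1,2,3,4,5)$ at the other points of a transversal of $\langle g_t\rangle$, and $1$ elsewhere. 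The point of this choice is that the power $h_{t+1}=g_{t+1}^{|g_t|}$ lands in the base group with coordinate $\alpha$ at the points of $\langle g_t\rangle$ and $\beta$ elsewhere; conjugating by $mz$ with $z\notin\langle g_t\rangle$ shifts a $\beta$-coordinate onto an $\alpha$-coordinate, and $\langle\alpha,\beta^u\rangle=\alt(5)$ for any $u$ because no proper subgroup of $\alt(5)$ contains elements of orders $3$ and $5$. Without this (or some equally explicit) device --- in particular, without the trick of passing to a suitable power of $g_{t+1}$ lying in the kernel and encoding the subgroup $\langle g_t\rangle$ in its coordinates --- your argument remains a plausible plan rather than a proof.
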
 
 Thus this is an example of a result that is true in the case of finite groups but fails  
 in profinite groups. Indeed, by \cite[Theorem 1.2]{almm} if $G$ a finite nonabelian group,
then the solubilizer $\Omega_{\mathfrak S}(g,G)$ of $g\in G$ is never abelian.

\section{Proof of Theorems \ref{ovvissimo} and \ref{soluble}}\label{sec:2} 

 In the sequel $\mathfrak C$ will be a class of finite groups which is closed for subgroups, quotients and direct products. Let $G$ be a profinite group and $\mu$ the  normalized 
 Haar measure on $G$  or on some direct product $G^k $ (see \cite[18.1]{Jarden} for an introduction to the properties of the Haar measure).
  In the first part of this section  we will prove some results on the Haar measure that are rather clear for countably based profinite groups (see e.g. \cite{sub}) but are less obvious in the general case. 
 
  Let $X,Y$ be closed subsets of $G$. 
  We define  
 \[ \Om_{\mathfrak C}(X,Y)=\{(x,y)\in X\times Y \mid \langle x, y\rangle \textrm{ is a pro-}\mathfrak C\textrm{-group}\}.\]
Let $\mathcal N$ be the family of all open normal subgroups of $G.$
Given $N \in \mathcal N,$ let  $\pi: G \rightarrow G/N$ be the natural projection of $G$ on $G/N$ and set 
\[ \Om_{\mathfrak C,N}(X,Y)=\{(x,y)\in XN\times YN \mid \langle x, y\rangle N/N \in \mathfrak C\},\]
  which is a closed subset of $G\times G$, since
 \[ \Om_{\mathfrak C,N}(X,Y) = \pi^{-1} \left( \Om_{\mathfrak C}(XN/N,YN/N) \right). \]
 In particular 
\[ \Om_{\mathfrak C}(X,Y)=\bigcap_{N\in \mathcal N}\Om_{\mathfrak C, N}(X,Y), \] 
 and $ \Om_{\mathfrak C}(X,Y)$  is  a closed subset of $X\times Y.$
 
If $\mu(X), \mu(Y)> 0$  we can define the probability $P_{\mathfrak{C}}(X,Y)$
 that two randomly  chosen elements $x\in X$ and $y\in Y$ generate a pro-${\mathfrak C}$-subgroup as the conditional probability that $(g_1,g_2)\in \Om_{\mathfrak C}(G,G)$
 given that $(g_1,g_2)\in X \times Y.$ We have
\begin{equation}\label{eq:probXY}
P_{\mathfrak C}(X,Y))=\frac{\mu(\Om_{\mathfrak C}(X,Y))}{\mu(X)\mu(Y)}.
\end{equation}
 Recall that in the introduction we defined
\[\Omega_{\mathfrak C}(x,G)=\{y\in G \mid \langle x,y\rangle \textrm{ is a pro-${\mathfrak C}$ group}\}, \]
  so that the probability that $x$ and a randomly chosen element of $G$  generate a  pro-${\mathfrak C}$ subgroup is the measure 
 \[P_{\mathfrak{C}}(x,G)=\mu(\Omega_{\mathfrak C}(x,G)).\]
  Now it is clear that $\Omega_{\mathfrak C}(x,G)$ is closed, being the projection of 
 $ \Om_{\mathfrak C}(\{x\},G) $ on the second component of $G \times G$.

\begin{lemma}\label{unotanti}
Assume that $X$ is a closed subset
 of $G$ with $\mu(X)>0.$ If $P_{\mathfrak{C}}(x,G)\geq \eta$ for every $x\in X,$ then $P_{\mathfrak{C}}(X,G)\geq \eta.$
\end{lemma}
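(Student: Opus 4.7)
The plan is to compute $\mu(\Omega_{\mathfrak C}(X,G))$ by slicing over the first coordinate and invoking Fubini's theorem on $G \times G$ equipped with the product Haar measure (which coincides with the Haar measure of $G \times G$). Since $\Omega_{\mathfrak C}(X,G)$ was shown above to be a closed subset of $G \times G$, hence Borel-measurable, the slice at a point $x \in X$ is precisely $\Omega_{\mathfrak C}(x,G)$, whose measure is $P_{\mathfrak C}(x,G)$.

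The key computation is then
\[
\mu(\Omega_{\mathfrak C}(X,G)) \;=\; \int_{X} \mu(\Omega_{\mathfrak C}(x,G)) \, d\mu(x) \;\geq\; \int_{X} \eta \, d\mu(x) \;=\; \eta \, \mu(X),
\]
where the first equality is Fubini and the inequality uses the hypothesis $P_{\mathfrak C}(x,G) \geq \eta$ for every $x \in X$. Dividing by $\mu(X)\mu(G) = \mu(X)$ and using formula (\ref{eq:probXY}) gives $P_{\mathfrak C}(X,G) \geq \eta$.

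The one point that deserves care is the applicability of Fubini in the generality of an arbitrary, not necessarily countably based, profinite group: one must check that the function $x \mapsto \mu(\Omega_{\mathfrak C}(x,G))$ is measurable on $X$ and that the slice integration formula holds for the closed set $\Omega_{\mathfrak C}(X,G)$. I would handle this by writing $\Omega_{\mathfrak C}(X,G) = \bigcap_{N \in \mathcal N} \Omega_{\mathfrak C,N}(X,G)$ as in the displayed intersection above. Each $\Omega_{\mathfrak C,N}(X,G)$ is the preimage under $(\pi,\pi)$ of a subset of the finite group $(G/N)\times(G/N)$, so Fubini for finite groups is trivial there, and the measure of the slice $\Omega_{\mathfrak C,N}(x,G)$ depends only on the coset $xN$ and is therefore a locally constant (hence continuous, hence measurable) function of $x$. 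Passing to the intersection over the directed family $\mathcal N$ by monotone convergence transfers both the measurability of $x \mapsto \mu(\Omega_{\mathfrak C}(x,G))$ and the slice-integration identity to $\Omega_{\mathfrak C}(X,G)$ itself.

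The main obstacle, such as it is, is precisely this measure-theoretic bookkeeping, and it is the same obstacle that motivated the preliminary material in this section; the actual inequality once Fubini is available is immediate.
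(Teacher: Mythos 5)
Your proof is correct and follows essentially the same route as the paper: the paper also applies Fubini's theorem to the characteristic function of $\Omega_{\mathfrak C}(X,G)$ on $G\times G$, identifies the inner integral at $x\in X$ with $P_{\mathfrak C}(x,G)$, and bounds it below by $\eta$ times the characteristic function of $X$. Your additional care about measurability of the slice function via the clopen approximants $\Omega_{\mathfrak C,N}(X,G)$ is a reasonable supplement that the paper leaves implicit.
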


\begin{proof}
Let $\chi: G\times G \to \mathbb R$ be the characteristic function of $\Om_{\mathfrak C}(X,G)$. 
 Then, applying Fubini's Theorem,
\begin{equation}\label{fubini}
\mu(\Om_{\mathfrak C}(X,G))=\int_{G\times G} \chi(x,y) d_\mu(x,y)=
\int_G \left(\int_G \chi(x,y) d_\mu(y) \right) d_\mu(x). 
\end{equation}
Note that if $x \in X$, then 
\begin{equation*}
 \int_G \chi(x,y) d_\mu(y) = \mu(\La_{\mathfrak C}(x,G)) = P_{\mathfrak{C}}(x,G).
\end{equation*}
So, by our assumption on the elements of $X$, 
\begin{equation*}
\int_G \chi(x,y) d_\mu(y)   \geq \begin{cases}
   \eta, & \text{if $x \in X$}.\\
    0, & \text{otherwise}. 
  \end{cases}
\end{equation*}
Hence, considering the characteristic function $\psi: G \to \mathbb R$ of $X,$
we have that 
\[ \int_G \chi(x,y) d_\mu(y) \geq  \eta \psi(x).\] 
Thus, by \eqref{fubini} we get 
\[ \mu(\Om_{\mathfrak C}(X,G)) \geq \int_G \eta \psi(x)d_\mu(x)=\eta \mu(X), \]
that gives $P_{\mathfrak{C}}(X,G)\geq \eta.$
\end{proof}

Let  $G$ be a profinite group and $x\in G.$ Given an open normal 
subgroup $N$ of $G$, let $$\La_{\mathfrak C, N}(x)=\{y\in G \mid \langle x, y\rangle N/N \in \mathfrak C \}.$$

\begin{lemma}\label{lem:inf}  Let  $G$ be a profinite group and let $\mathcal N$ be the family of all open normal subgroups of $G$. For any $x\in G$ we have
\[ P_{\mathfrak{C}}(x,G)=\inf_{N \in \mathcal N} \mu \left(\La_{\mathfrak C, N}(x) \right).\]  
\end{lemma}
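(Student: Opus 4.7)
The plan is to show that the desired equality is essentially the continuity of Haar measure along the decreasing filtered family of closed sets $\{\Omega_{\mathfrak C,N}(x)\}_{N\in\mathcal N}$, whose intersection is $\Omega_{\mathfrak C}(x,G)$.

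First, I would verify the identity
\[
\Omega_{\mathfrak C}(x,G)=\bigcap_{N\in\mathcal N}\Omega_{\mathfrak C,N}(x).
\]
If $y\in\Omega_{\mathfrak C}(x,G)$, then $H:=\overline{\langle x,y\rangle}$ is pro-$\mathfrak C$, so every finite continuous quotient $HN/N\cong H/(H\cap N)$ lies in $\mathfrak C$, which is exactly the statement $y\in\Omega_{\mathfrak C,N}(x)$ for all $N$. Conversely, if $y$ belongs to all $\Omega_{\mathfrak C,N}(x)$, then all finite continuous quotients of $H$ belong to $\mathfrak C$, so $H$ is pro-$\mathfrak C$.

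Next, I would check that $\{\Omega_{\mathfrak C,N}(x)\}_{N\in\mathcal N}$ is a downward-directed family of closed sets. Given $N_1,N_2\in\mathcal N$, the subgroup $N_3=N_1\cap N_2$ is again in $\mathcal N$, and since $\mathfrak C$ is closed under taking quotients, $\Omega_{\mathfrak C,N_3}(x)\subseteq \Omega_{\mathfrak C,N_1}(x)\cap\Omega_{\mathfrak C,N_2}(x)$. Each $\Omega_{\mathfrak C,N}(x)$ is closed (indeed clopen), being the preimage under $G\to G/N$ of a finite subset of $G/N$; in particular it is a finite union of cosets of the open subgroup $N$.

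Finally, I would invoke continuity of the Haar measure along this net. Clearly $\mu(\Omega_{\mathfrak C}(x,G))\le \inf_{N}\mu(\Omega_{\mathfrak C,N}(x))$. For the reverse inequality, use outer regularity: given an open set $U\supseteq\Omega_{\mathfrak C}(x,G)$, the family $\{\Omega_{\mathfrak C,N}(x)\setminus U\}_{N\in\mathcal N}$ is a downward-directed family of compact sets with empty intersection, so by compactness of $G$ some member is already empty, i.e.\ $\Omega_{\mathfrak C,N}(x)\subseteq U$ for some $N$. Hence $\inf_N\mu(\Omega_{\mathfrak C,N}(x))\le\mu(U)$, and taking the infimum over such $U$ gives $\inf_N\mu(\Omega_{\mathfrak C,N}(x))\le\mu(\Omega_{\mathfrak C}(x,G))=P_{\mathfrak C}(x,G)$.

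The only delicate point is that $\mathcal N$ need not be countable, so one cannot simply use $\sigma$-continuity of $\mu$ along a chain; the compactness/regularity argument above is what replaces it. Once this is in place, everything else is bookkeeping using the hypothesis that $\mathfrak C$ is quotient-closed.
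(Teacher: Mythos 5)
Your proof is correct and follows essentially the same route as the paper: the inequality $\le$ falls out of the intersection identity $\Omega_{\mathfrak C}(x,G)=\bigcap_{N}\Omega_{\mathfrak C,N}(x)$, and the reverse inequality is obtained by combining directedness and compactness of the family $\{\Omega_{\mathfrak C,N}(x)\}_{N\in\mathcal N}$ with (outer) regularity of the Haar measure, which is exactly the substitute for the unavailable $\sigma$-continuity that the paper also uses. The only cosmetic difference is in the packaging: the paper shows each open approximation $\Omega_{\mathfrak C}(x,G)N$ equals $\Omega_{\mathfrak C,M_N}(x)N$ for a single $M_N\in\mathcal N$ (via Wilson's Lemma 0.3.1(h) and a finite subcover), whereas you apply the finite intersection property to the compact sets $\Omega_{\mathfrak C,N}(x)\setminus U$ for an arbitrary open $U\supseteq\Omega_{\mathfrak C}(x,G)$ --- the same compactness argument in a slightly more streamlined form.
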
 
\begin{proof} 
 Let $\mathcal N$ be the family of all open normal subgroups of $G$. 
Recall that 
\[ P_{\mathfrak{C}}(x,G)=\mu \left(\La_{\mathfrak C}(x,G) \right)\] 
and note that  
$\La_{\mathfrak C}(x,G)$, being closed, is equal to the intersection of all the subgroups $\La_{\mathfrak C}(x,G)  N$, where $N$ ranges $\mathcal N$. Hence, by definition of the Haar measure, 
\[ P_{\mathfrak{C}}(x,G)=\inf_{N \in \mathcal N} \mu \left(\La_{\mathfrak C}(x,G)  N \right).\]
On the other hand, 
\begin{equation}\label{eq:inter}
\La_{\mathfrak C}(x,G)= \bigcap_{N \in \mathcal N}  \La_{\mathfrak C, N}(x),
\end{equation}
 whence 
\begin{equation}\label{eq:dis}
P_{\mathfrak{C}}(x,G) \le \inf_{N \in \mathcal N} \mu \left( \La_{\mathfrak C, N}(x) \right). 
\end{equation}
We claim that \ref{eq:dis} is actually an equality.

Note that if $M_1$ and $M_2$ are open normal subgroups of $G$, then 
\[ \La_{\mathfrak C, M_1}(x)  \cap \La_{\mathfrak C, M_1}(x)  = \La_{\mathfrak C, M_1 \cap M_2}(x). \] 
In view of \ref{eq:inter} and \cite[Lemma 0.3.1 (h)]{wilson}, we have 
\[ \La_{\mathfrak C}(x,G) N =\left(  \bigcap_{M\in \mathcal N}  \La_{\mathfrak C, M}(x) \right) N 
= \bigcap_{M \in \mathcal N}  \La_{\mathfrak C, M}(x) N.\]
 As $\La_{\mathfrak C}(x,G) N $ is open and $G$ is compact, $\La_{\mathfrak C}(x,G) N $ is the intersection of finitely many 
 $\La_{\mathfrak C, M_i}(x) N$, for $i =1, \dots ,r$. Therefore, we set $M_N= \cap_{i=1}^{r} M_i$ and observe that 
  \begin{eqnarray*}
 \La_{\mathfrak C}(x,G) N =  \bigcap_{i=1, \dots, r}  \La_{\mathfrak C, M_i}(x) N 
 =\left( \bigcap_{i=1, \dots, r}  \La_{\mathfrak C, M_i}(x) \right) N 
 = \La_{\mathfrak C, M_N}(x) N. 
  \end{eqnarray*}
 It follows that 
   \begin{eqnarray*}
 P_{\mathfrak{C}}(x,G) = \mu \left(\La_{\mathfrak C}(x,G) \right) &=& \inf_{N \in \mathcal N} \mu \left(\La_{\mathfrak C}(x,G)  N \right) \\
  &=& \inf_{N \in \mathcal N} \mu \left(  \La_{\mathfrak C, M_N}(x) N \right) \\
  &\geq&  \inf_{N \in \mathcal N} \mu \left(  \La_{\mathfrak C, M_N}(x) \right) \\
   &\geq&  \inf_{M \in \mathcal N}  \mu \left( \La_{\mathfrak C, M}(x) \right), 
   \end{eqnarray*}
which together with \ref{eq:dis} gives the desired result. 
\end{proof}

\begin{cor}\label{cor:inf} Let $G$ be a profinite group and $\mathcal N$ the family of all open normal subgroups of $G$.  For any $x\in G$  we have
 \[ P_{\mathfrak{C}}(x,G)=\inf_{N \in \mathcal N} P_{\mathfrak{C}}(xN,G/N).\] 
\end{cor}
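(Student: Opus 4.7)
The plan is to identify $\La_{\mathfrak C, N}(x)$ with a preimage under the canonical projection and then simply invoke Lemma \ref{lem:inf}. Specifically, for $N \in \mathcal N$, let $\pi_N \colon G \to G/N$ be the natural surjection. I claim that
\[ \La_{\mathfrak C, N}(x) = \pi_N^{-1}\bigl(\Omega_{\mathfrak C}(xN, G/N)\bigr). \]
Indeed, since $G/N$ is finite and $\mathfrak C$ is closed under quotients, subgroups and direct products, a subgroup of $G/N$ is pro-$\mathfrak C$ precisely when it belongs to $\mathfrak C$. Moreover, $\langle xN, yN\rangle = \langle x,y\rangle N/N$. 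Hence $yN \in \Omega_{\mathfrak C}(xN, G/N)$ if and only if $\langle x,y\rangle N/N \in \mathfrak C$, which is the defining condition for $y \in \La_{\mathfrak C, N}(x)$.

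Next, I would use the standard fact that the Haar measure is compatible with continuous surjections of profinite groups: for any measurable set $S \subseteq G/N$, one has $\mu(\pi_N^{-1}(S)) = \mu_{G/N}(S)$, where $\mu_{G/N}$ is the normalized Haar measure on the finite quotient $G/N$ (simply the uniform counting measure). Applying this to $S = \Omega_{\mathfrak C}(xN, G/N)$ gives
\[ \mu\bigl(\La_{\mathfrak C, N}(x)\bigr) = \mu_{G/N}\bigl(\Omega_{\mathfrak C}(xN, G/N)\bigr) = P_{\mathfrak C}(xN, G/N). \]

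To conclude, I would simply substitute this equality into the formula established in Lemma \ref{lem:inf}:
\[ P_{\mathfrak C}(x, G) = \inf_{N \in \mathcal N} \mu\bigl(\La_{\mathfrak C, N}(x)\bigr) = \inf_{N \in \mathcal N} P_{\mathfrak C}(xN, G/N). \]
There is no real obstacle here: the statement is essentially a repackaging of Lemma \ref{lem:inf}, and the only content is the (routine) verification that $\La_{\mathfrak C, N}(x)$ pulls back the set $\Omega_{\mathfrak C}(xN, G/N)$ together with the compatibility of Haar measure with quotients.
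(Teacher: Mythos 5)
Your proposal is correct and follows essentially the same route as the paper: identify $\La_{\mathfrak C, N}(x)$ as the preimage $\pi_N^{-1}\bigl(\Omega_{\mathfrak C}(xN,G/N)\bigr)$, use the compatibility of the normalized Haar measure with the quotient map to get $\mu\bigl(\La_{\mathfrak C, N}(x)\bigr)=P_{\mathfrak C}(xN,G/N)$, and then apply Lemma \ref{lem:inf}. No gaps.
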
 
\begin{proof} 
 Let $N \in \mathcal N$  and recall that
 $\La_{\mathfrak C, N}(x)=\{y\in G \mid \langle x, y\rangle N/N \in \mathfrak C \}.$
 Let $\pi: G \rightarrow G/N$ be the natural projection. Then 
 \[\La_{\mathfrak C, N}(x)= \pi^{-1} \left(\{yN \in G/N  \mid \langle x, y\rangle N/N \in \mathfrak C \} \right) =
 \pi^{-1} \left(\Omega_{\mathfrak C}(xN,G/N)\right),\]
 so
 \[ \mu_G \left( \La_{\mathfrak C, N}(x)  \right) = \mu_{G/N} \left(\Omega_{\mathfrak C}(xN,G/N)\right)= P_{\mathfrak{C}}(xN,G/N)) \]
 (see e.g. \cite[Proposition 18.2.2]{Jarden}). 
Now the result follows from Lemma \ref{lem:inf}. 
Namely, 
\[ P_{\mathfrak{C}}(x,G)=\inf_{N \in \mathcal N} \mu \left(\La_{\mathfrak C, N}(x) \right)=\inf_{N \in \mathcal N} P_{\mathfrak{C}}(xN,G/N). \qedhere\]
\end{proof}
The following lemma is almost obvious. It will be useful later on.
\begin{lemma}\label{lem:infprod}
Assume $(\alpha_\lambda)_{\lambda \in \Lambda}$ and $(\beta_\lambda)_{\lambda \in \Lambda}$ are two families of positive real numbers with the property that, 
for every $\lambda_1, \lambda_2  \in \Lambda$ there exists $\mu \in \Lambda$ such that 
\[ \alpha_\mu  \le \alpha_{\lambda_1} \quad \textrm{ and } \quad  \beta_\mu \le \beta_{\lambda_2}. \]
Then 
\[ \inf_{\lambda \in \Lambda} \left( \alpha_\lambda \beta_\lambda \right) = 
\left(  \inf_{\lambda \in \Lambda}  \alpha_\lambda  \right) \left(  \inf_{\lambda \in \Lambda}  \beta_\lambda  \right).\]
\end{lemma}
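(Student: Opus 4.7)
The plan is to prove the two inequalities $\inf_\lambda(\alpha_\lambda\beta_\lambda) \ge (\inf_\lambda \alpha_\lambda)(\inf_\lambda \beta_\lambda)$ and the reverse separately. Set $A=\inf_{\lambda\in\Lambda}\alpha_\lambda$ and $B=\inf_{\lambda\in\Lambda}\beta_\lambda$; note that $A,B\ge 0$ since the families consist of positive numbers.

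For the inequality $\inf_\lambda(\alpha_\lambda\beta_\lambda)\ge AB$, I would simply observe that for every $\lambda\in\Lambda$ we have $\alpha_\lambda\ge A$ and $\beta_\lambda\ge B$, so $\alpha_\lambda\beta_\lambda\ge AB$ by positivity, and the bound survives passing to the infimum. This direction does not use the hypothesis at all.

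The reverse inequality is where the hypothesis enters. I would argue by $\varepsilon$-approximation. Fix $\varepsilon>0$ and pick $\lambda_1,\lambda_2\in\Lambda$ with $\alpha_{\lambda_1}<A+\varepsilon$ and $\beta_{\lambda_2}<B+\varepsilon$. By the hypothesis, there exists $\mu\in\Lambda$ with $\alpha_\mu\le\alpha_{\lambda_1}$ and $\beta_\mu\le\beta_{\lambda_2}$, hence
\[
\inf_{\lambda\in\Lambda}(\alpha_\lambda\beta_\lambda)\le \alpha_\mu\beta_\mu\le \alpha_{\lambda_1}\beta_{\lambda_2} < (A+\varepsilon)(B+\varepsilon).
\]
Letting $\varepsilon\to 0^+$ gives $\inf_\lambda(\alpha_\lambda\beta_\lambda)\le AB$, which combined with the previous paragraph yields the claimed equality.

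There is essentially no real obstacle here; the only point to be careful about is that without the joint-refinement hypothesis one could only conclude $\inf_\lambda(\alpha_\lambda\beta_\lambda)\le \alpha_{\lambda_1}\beta_{\lambda_1}$ and $\inf_\lambda(\alpha_\lambda\beta_\lambda)\le \alpha_{\lambda_2}\beta_{\lambda_2}$, neither of which is small enough to give the bound. The directed-refinement property is used exactly to produce a single index $\mu$ at which both factors are nearly optimal simultaneously.
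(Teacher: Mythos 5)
Your proof is correct and follows essentially the same route as the paper: the easy inequality $\inf_\lambda(\alpha_\lambda\beta_\lambda)\ge AB$ from termwise bounds, and the reverse inequality by choosing near-optimal indices $\lambda_1,\lambda_2$ and invoking the directedness hypothesis to find a single $\mu$ at which both factors are simultaneously small. The only cosmetic difference is that the paper pre-selects $\eta$ with $\alpha\eta+\beta\eta+\eta^2<\epsilon$ instead of letting $\varepsilon\to 0^+$ at the end.
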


\begin{proof} 
Let $\alpha= \inf_{\lambda \in \Lambda}  \alpha_\lambda$ and $\beta=\inf_{\lambda \in \Lambda}  \beta_\lambda $. 
Clearly 
\[ \inf_{\lambda \in \Lambda} \left( \alpha_\lambda \beta_\lambda \right) \ge \alpha \beta. \]
Given $\epsilon >0$, choose $\eta >0$ such that $\alpha  \eta + \beta \eta + \eta^2 < \epsilon.$ 
 Then there exists two indices  $\lambda_1, \lambda_2  \in \Lambda$ such that 
 \[ \alpha_{\lambda_1} \le \alpha  + \eta  \quad \textrm{and} \quad  \beta_{\lambda_2} \le  \beta + \eta. \] 
 By hypothesis,  there exists $\mu \in \Lambda$ such that  
 $ \alpha_\mu  \le \alpha_{\lambda_1} $ and $\beta_\mu \le \beta_{\lambda_2}$. Hence 
\[ \alpha_\mu  \le  \alpha  + \eta   \quad \textrm{ and } \quad  \beta_\mu \le  \beta + \eta, \]
and therefore 
\[ \inf_{\lambda \in \Lambda} \left( \alpha_\lambda \beta_\lambda \right) \le 
\alpha_\mu  \beta_\mu \le ( \alpha  + \eta ) ( \beta + \eta) = \alpha \beta + \alpha  \eta + \beta \eta + \eta^2 \le   \alpha \beta+ \epsilon, \]
and the lemma follows. 
\end{proof}

In a natural way, the probabilistic properties of profinite groups are determined by those of their finite images. This is formalized  in the next proposition.

\begin{prop}\label{prop:infXY}  Let $X,Y$ be closed subsets of a profinite group $G$ and let $\mathcal N$ be the family of all open normal subgroups of $G$. If $\mu(X), \mu(Y)> 0$, then 
\[ P_{\mathfrak{C}}(X,Y)=\inf_{N \in \mathcal N} P_{\mathfrak{C}}(XN/N,YN/N)).\] 
\end{prop}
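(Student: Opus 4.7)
My plan is to apply Lemma~\ref{lem:infprod} twice: once to relate $\mu(X)\mu(Y)$ to $\inf_N\mu(XN)\mu(YN)$, and once to split the product identity $\mu(\Omega_{\mathfrak{C},N}(X,Y))=P_{\mathfrak{C}}(XN/N,YN/N)\cdot\mu(XN)\mu(YN)$ into its two infima.

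First I would record the three ``set-theoretic'' identities the argument needs. Using that $X$, $Y$, and $\Omega_{\mathfrak{C}}(X,Y)$ are closed, together with the observation that the families $\{XN\}_N$, $\{YN\}_N$, and $\{\Omega_{\mathfrak{C},N}(X,Y)\}_N$ are downward directed under refinement with intersections equal respectively to $X$, $Y$, and $\Omega_{\mathfrak{C}}(X,Y)$, outer regularity of the Haar measure (via the same argument as in Lemma~\ref{lem:inf}) gives
\[
\mu(X)=\inf_{N}\mu(XN),\qquad \mu(Y)=\inf_{N}\mu(YN),\qquad \mu(\Omega_{\mathfrak{C}}(X,Y))=\inf_{N}\mu(\Omega_{\mathfrak{C},N}(X,Y)).
\]
A first application of Lemma~\ref{lem:infprod} to the monotone decreasing families $(\mu(XN))_N$ and $(\mu(YN))_N$ (the hypothesis is witnessed by the common refinement $\mu=\lambda_1\cap\lambda_2$) then yields $\mu(X)\mu(Y)=\inf_N\mu(XN)\mu(YN)$.

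The heart of the proof is a second application of Lemma~\ref{lem:infprod} to $\alpha_N:=P_{\mathfrak{C}}(XN/N,YN/N)$ and $\beta_N:=\mu(XN)\mu(YN)$, so that $\alpha_N\beta_N=\mu(\Omega_{\mathfrak{C},N}(X,Y))$. Monotonicity of $\beta_N$ under refinement is automatic, so the key point is to verify the analogous monotonicity for $\alpha_N$: that $M\subseteq N$ implies $P_{\mathfrak{C}}(XM/M,YM/M)\le P_{\mathfrak{C}}(XN/N,YN/N)$. The natural projection $\rho\colon G/M\to G/N$ carries $\Omega_{\mathfrak{C}}(XM/M,YM/M)$ into $\Omega_{\mathfrak{C}}(XN/N,YN/N)$, because $\mathfrak{C}$ is closed under quotients; combined with a count of the fibres of $\rho\times\rho$ over $XN/N\times YN/N$, this gives the desired inequality of probabilities. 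Once both monotonicities are in place, for any $\lambda_1,\lambda_2\in\mathcal N$ the element $\mu=\lambda_1\cap\lambda_2$ simultaneously satisfies $\alpha_\mu\le\alpha_{\lambda_1}$ and $\beta_\mu\le\beta_{\lambda_2}$, so Lemma~\ref{lem:infprod} applies and gives
\[
\mu(\Omega_{\mathfrak{C}}(X,Y))=\inf_N\mu(\Omega_{\mathfrak{C},N}(X,Y))=\Bigl(\inf_N P_{\mathfrak{C}}(XN/N,YN/N)\Bigr)\mu(X)\mu(Y).
\]
Dividing by the positive quantity $\mu(X)\mu(Y)$ yields the proposition.

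I expect the main obstacle to be the monotonicity of $\alpha_N$. The pushforward of the uniform measure on $XM/M\times YM/M$ under $\rho\times\rho$ is not uniform on $XN/N\times YN/N$ in general, because different fibres of $\rho$ can meet $XM/M$ in differing numbers of $M$-cosets; consequently the naive ``quotients weaken the generating condition'' slogan must be refined into a quantitative comparison of fibre weights, and that book-keeping is where the real work lies.
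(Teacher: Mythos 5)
Your argument follows the paper's proof almost step for step: both establish $\mu(\Om_{\mathfrak C}(X,Y))=\inf_{N} \mu(\Om_{\mathfrak C,N}(X,Y))$ by the compactness argument of Lemma \ref{lem:inf}, and both then factor the infimum through Lemma \ref{lem:infprod} with $\alpha_N=P_{\mathfrak C}(XN/N,YN/N)$ and $\beta_N=\mu(XN)\mu(YN)$. (Your preliminary application of Lemma \ref{lem:infprod} to obtain $\mu(X)\mu(Y)=\inf_N\mu(XN)\mu(YN)$ is harmless but not needed as a separate step; the paper folds it into the final computation.)

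The step you defer --- the monotonicity $P_{\mathfrak C}(XM/M,YM/M)\le P_{\mathfrak C}(XN/N,YN/N)$ for $M\le N$ --- is a genuine gap, and no amount of fibre bookkeeping will close it for arbitrary closed $X,Y$: the inequality is false. Take $G=\alt(5)\times C_2$ with $C_2=\langle c\rangle$, $\mathfrak C=\mathfrak S$, $\alpha=(1\,2\,3)$, $\beta=(1\,2\,3\,4\,5)$, and
\[ X=\{(1,1),\,(1,c),\,(\alpha,1)\},\qquad Y=\{(\beta,1)\},\qquad N=1\times C_2 .\]
Then $P_{\mathfrak S}(X,Y)=2/3$, since only the pair $\left((\alpha,1),(\beta,1)\right)$ generates an insoluble group, whereas $XN/N=\{1,\alpha\}$ and $YN/N=\{\beta\}$ in $G/N\cong\alt(5)$ give $P_{\mathfrak S}(XN/N,YN/N)=1/2$; hence $\inf_N P_{\mathfrak S}(XN/N,YN/N)\le 1/2<2/3=P_{\mathfrak S}(X,Y)$, so the stated identity itself fails for these $X,Y$. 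The obstruction is exactly the one you anticipate: good pairs at level $M$ do map to good pairs at level $N$, but the resulting proportion at level $M$ is a \emph{weighted} average over $XN/N\times YN/N$, with weights the fibre cardinalities, and a weighted average is not dominated by the unweighted one when the good pairs happen to carry the large weights. You should be aware that the paper's own proof asserts this same monotonicity with no more justification than ``since $G/M$ is finite,'' so your proposal reproduces, rather than introduces, the weak point. Both arguments (and the statement) do go through under the additional hypothesis that $X$ and $Y$ are unions of cosets of a common open subgroup: then every fibre of $XM/M\to XN/N$ has the same cardinality, the weighted and unweighted averages coincide, and $\alpha_N$ is genuinely monotone. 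That extra hypothesis holds in the only place the proposition is subsequently invoked (Corollary \ref{modw}, where $X=uQ$ and $Y=vQ$).
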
 
\begin{proof} 
Recall that 
\begin{equation}\label{eq:interX}
 \Om_{\mathfrak C}(X,Y)=\bigcap_{N\in \mathcal N}\Om_{\mathfrak C, N}(X,Y), 
 \end{equation}
where 
$ \Om_{\mathfrak C,N}(X,Y)=\{(x,y)\in XN\times YN \mid \langle x, y\rangle N/N \in \mathfrak C\}.$

Our first claim is that 
\begin{equation}\label{eq:eqX}
 \mu \left( \Om_{\mathfrak C}(X,Y) \right) =\inf_{N\in \mathcal N} \mu \left( \Om_{\mathfrak C, N}(X,Y) \right). 
  \end{equation}

Clearly, by \eqref{eq:interX},
\begin{equation}\label{eq:leX}
 \mu \left( \Om_{\mathfrak C}(X,Y) \right) \le \inf_{N\in \mathcal N} \mu \left( \Om_{\mathfrak C, N}(X,Y) \right). 
   \end{equation}

On the other hand, 
by \eqref{eq:interX} and \cite[Lemma 0.3.1 (h)]{wilson}, we have 
\[ \Om_{\mathfrak C}(X,Y) N^2 =\left(  \bigcap_{M \in \mathcal N}  \Om_{\mathfrak C, M}(X,Y) \right) N^2 = \bigcap_{M \in \mathcal N}  \Om_{\mathfrak C, M}(X,Y) N^2.\]
 As $\Om_{\mathfrak C}(X,Y) N^2 $ is open and $G$ is compact, $\Om_{\mathfrak C}(X,Y) N^2 $ is the intersection of finitely many 
 $\Om_{\mathfrak C, M_i}(X,Y) N^2$, for $i =1, \dots ,r$. Moreover, for $M_N= \cap_{i=1}^{r} M_i$  we have that 
\[\bigcap_{i=1, \dots, r}  \Om_{\mathfrak C, M_i}(X,Y)   = \Om_{\mathfrak C, M_N}(X,Y).\]
 Therefore, 
 \begin{eqnarray*} 
 \Om_{\mathfrak C}(X,Y) N^2 = \!\!\!
 \bigcap_{i=1, \dots, r}\!\!\!  \Om_{\mathfrak C, M_i}(X,Y) N^2 
 =\left( \bigcap_{i=1, \dots, r}\!\!  \Om_{\mathfrak C, M_i}(X,Y)\right)N^2 
 = \Om_{\mathfrak C, M_N}(X,Y) N^2. 
  \end{eqnarray*}
It follows that 
   \begin{eqnarray*}
  \mu \left(\Om_{\mathfrak C}(X,Y) \right) &=& \inf_{N \in \mathcal N} \mu \left(\Om_{\mathfrak C}(X,Y)  N^2 \right) \\
  &=& \inf_{N \in \mathcal N} \mu \left(  \Om_{\mathfrak C, M_N}(X,Y) N^2 \right) \\
  &\geq&  \inf_{N \in \mathcal N} \mu \left(  \Om_{\mathfrak C, M_N}(X,Y) \right) \\
   &\geq&  \inf_{M \in \mathcal N}  \mu \left( \Om_{\mathfrak C, M}(X,Y) \right), 
   \end{eqnarray*}
 that together with \eqref{eq:leX} gives  \eqref{eq:eqX}. 
 
 For every $N \in \mathcal N$ let $\pi: G^2 \rightarrow (G/N)^2$ be the natural projection, and let $\overline{XN}$, $\overline{YN}$ be the images of $X$ and $Y$ respectively in the quotient group $G/N$. Then 
 \[\Om_{\mathfrak C, N}(X,Y)=  
 \pi^{-1} \left(\Om_{\mathfrak C}(\overline{XN},\overline{YN})\right)\]
 and so 
 \[ \mu_{G^2} \left( \Om_{\mathfrak C, N}(X,Y)  \right) =    \mu_{(G/N)^2} \left(\Om_{\mathfrak C}(\overline{XN},\overline{YN})\right) \]
 (see e.g. \cite[Proposition 18.2.2]{Jarden}). 
Hence 
   \[ \mu_{G^2} \left( \Om_{\mathfrak C}(X,Y)  \right) =   \inf_{N \in \mathcal N} \mu_{(G/N)^2} \left(\Om_{\mathfrak C}(\overline{XN},\overline{YN})\right), \]
 that is,  
\[P_{\mathfrak{C}}(X,Y)  \mu_{G^2} \left( X \times Y \right) =
  \inf_{N \in \mathcal N}  \left( P_{\mathfrak{C}}(\overline{XN},\overline{YN}) \, \mu_{(G/N)^2} \left( \overline{XN} \times \overline{YN} \right) \right).\] 
  
  Note that $\{P_{\mathfrak{C}}(\overline{XN},\overline{YN}) \}_{N \in \mathcal N}$ and $ \left\{\mu_{(G/N)^2} \left( \overline{XN} \times \overline{YN} \right) \right\}_{N \in \mathcal N} $ satisfy the assumptions of Lemma \ref{lem:infprod}. Indeed if $N_1, N_2 \in \mathcal N$, then for $M=N_1 \cap N_2$, since $G/M$ is finite, we have that 
  \[ P_{\mathfrak{C}}( XN_1/N_1, YN_1/N_1) \ge P_{\mathfrak{C}}( XM/M, YM/M)\] 
  and clearly also $    \mu_{(G/N_2)^2} \left( XN_2/N_2 \times YN_2/N_2 \right) \ge  \mu_{(G/M)^2} \left( XM/M \times YM/M \right).$ 
  So, by Lemma \ref{lem:infprod},  
  \[\begin{aligned}   &\inf_{N \in \mathcal N}  \left( P_{\mathfrak{C}}(\overline{XN},\overline{YN}) \mu_{(G/N)^2} \left( \overline{XN} \times \overline{YN} \right) \right)
   \\ &\quad \quad =  \left(  \inf_{N \in \mathcal N} P_{\mathfrak{C}}(\overline{XN},\overline{YN}) \right) \left(  \inf_{N \in \mathcal N}   \mu_{(G/N)^2} \left( \overline{XN} \times \overline{YN} \right) \right).\end{aligned}\]
As
   \[ \mu_{G^2} \left( X \times Y \right) = \inf_{N \in \mathcal N}  \mu_{(G/N)^2}  \left( \overline{XN} \times \overline{YN} \right),\] 
   whenever $\mu_{G^2} \left( X \times Y \right)= \mu(X) \mu(Y)  \neq 0$ we get that
   \[P_{\mathfrak{C}}(X,Y) = \inf_{N \in \mathcal N}  P_{\mathfrak{C}}(\overline{XN},\overline{YN}),\]
   as claimed. 
\end{proof}

\begin{remark} If $Y$ is a closed subset of $G$ and $x\in G$, we can consider the subset 
$\Omega_{\mathfrak C}(x,Y)$ of $Y$ consisting of elements $y\in Y$ with the property that $\langle x,y\rangle$ is a pro-${\mathfrak C}$ group. 
  If  $\mu(Y)> 0$, then we can define the probability $P_{\mathfrak{C}}(x,Y)$  that a randomly chosen element of $Y$  generates a  pro-${\mathfrak C}$ group together with $x$ as the conditional probability that $g \in \Omega_{\mathfrak C}(x,G)$ 
 given that $g \in Y:$ 
 \[ P_{\mathfrak{C}}(x,Y)= \frac{\mu \left( \Omega_{\mathfrak C}(x,Y)  \right)}{\mu (Y)}. \]
  Arguing as in the proof of Proposition  \ref{prop:infXY}, it can be easily proved that 
 \[ P_{\mathfrak{C}}(x,Y) 
  = \inf_{N \in \mathcal N} P_{\mathfrak{C}}(xN,YN/N).\] 
\end{remark}

\begin{cor}\label{cor:quoziente} 
 Let $\mathfrak C$ be a class of finite groups which is closed for subgroups, quotients and direct products. Let $G$ be a profinite group, $N$  a closed normal subgroup of $G$, $x \in G$  and $X, Y$  closed subsets of $G$. The following holds: 
 \begin{enumerate}
\item $P_{\mathfrak{C}}(xN,G/N)\ge P_{\mathfrak{C}}(x,G)$
\item   If $\mu(X), \mu(Y) >0$, then  $P_{\mathfrak{C}}(XN/N,YN/N)\ge P_{\mathfrak{C}}(X,Y)$.
\end{enumerate}
In particular, if $G$ is $\mathfrak{C}$-positive, then $G/N$ is also $\mathfrak{C}$-positive.
\end{cor}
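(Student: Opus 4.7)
The plan is to obtain both inequalities as direct consequences of the ``finite image'' descriptions of the probabilities established earlier in the section, namely Corollary \ref{cor:inf} for part (1) and Proposition \ref{prop:infXY} for part (2). The key observation is that the open normal subgroups of the quotient $G/N$ correspond bijectively, via the correspondence theorem, to the open normal subgroups $M$ of $G$ satisfying $M\supseteq N$, so that an infimum taken over the open normal subgroups of $G/N$ is the same as an infimum over a (typically proper) subfamily of $\mathcal N$.

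For part (1), I would apply Corollary \ref{cor:inf} to the profinite group $G/N$ and the element $xN$. Writing a generic open normal subgroup of $G/N$ as $M/N$ with $M\in\mathcal N$ and $M\supseteq N$, and using $(G/N)/(M/N)\cong G/M$ together with the identification $(xN)(M/N)\leftrightarrow xM$, this yields
\[
P_{\mathfrak C}(xN,G/N)=\inf_{M\in\mathcal N,\ M\supseteq N} P_{\mathfrak C}(xM,G/M).
\]
On the other hand, Corollary \ref{cor:inf} applied to $G$ itself gives $P_{\mathfrak C}(x,G)=\inf_{M\in\mathcal N}P_{\mathfrak C}(xM,G/M)$. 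Since the first infimum runs over a subfamily of the indexing set of the second, it is bounded below by the second, which is exactly the desired inequality.

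For part (2), the argument is structurally identical, but one must first observe that whenever $N\subseteq M$ one has $XN\cdot M=XM$, so that the image of $XN/N$ under the projection $G/N\to (G/N)/(M/N)\cong G/M$ is $XM/M$, and analogously for $Y$. Then Proposition \ref{prop:infXY} applied to $\bar X=XN/N$ and $\bar Y=YN/N$ in $G/N$ yields
\[
P_{\mathfrak C}(XN/N,YN/N)=\inf_{M\in\mathcal N,\ M\supseteq N} P_{\mathfrak C}(XM/M,YM/M),
\]
and comparing with Proposition \ref{prop:infXY} applied to $G$ itself finishes the argument. (Note that $\mu(XN/N)\ge\mu(X)>0$ and similarly for $Y$, so all the probabilities involved are well defined.) The final assertion is immediate: if $P_{\mathfrak C}(x,G)>0$ for every $x\in G$, then by (1) $P_{\mathfrak C}(xN,G/N)>0$ for every $xN\in G/N$, so $G/N$ is $\mathfrak C$-positive.

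I do not anticipate any serious obstacle here: all the non-trivial analytic content has been packaged into Corollary \ref{cor:inf} and Proposition \ref{prop:infXY}, and the corollary is essentially a bookkeeping exercise with the correspondence theorem. The only mildly delicate point is verifying the identity $XN\cdot M=XM$ used to identify images in the quotient in part (2), which is completely routine once one notices $N\subseteq M$.
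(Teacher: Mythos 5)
Your proof is correct and follows essentially the same route as the paper: both reduce the claim to the finite-quotient descriptions of Corollary \ref{cor:inf} and Proposition \ref{prop:infXY}, applied once to $G$ and once to $G/N$ via the correspondence between open normal subgroups of $G/N$ and those of $G$ containing $N$. The only (immaterial) difference is in the final comparison of the two infima: you observe that the index set for $G/N$ is a subfamily of $\mathcal N$, whereas the paper compares termwise using the finite-group inequality $P_{\mathfrak{C}}(xMN,G/MN)\ge P_{\mathfrak{C}}(xM,G/M)$, which rests on $\mathfrak C$ being quotient-closed.
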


\begin{proof}
Since by Corollary \ref{cor:inf},  ${P_{\mathfrak C}(x,G)}=\inf_M {P_{\mathfrak C}(xM,G/M)},$ 
 with $M$ running over the set of the open normal subgroups of $G,$ it suffices to note that, as $\mathfrak C$ is closed for quotients, 
 in every finite (continuous) quotient of $G,$ we have
 \[P_{\mathfrak S}(xMN,G/MN)\ge P_{\mathfrak S}(xM,G/M).\]
 The second statement follows from the same argument and Proposition \ref{prop:infXY}. 
\end{proof}

\begin{lemma}\label{unionefinita} 
 Let $G$ be a profinite group and assume that $X,Y$ are closed subsets of $G$ such that $X$ is the disjoint union of $r$ closed 
subsets $X_1,\dots,X_r$ with $\mu(X_i)=\alpha>0$ for every $i=1,\dots,r$ and  $Y$ is the disjoint union of $s$ closed subsets $Y_1,\dots,Y_s$ with $\mu(Y_j)=\beta>0$ for every $i=1,\dots,s$. Then
\begin{enumerate}
\item $P_{\mathfrak C}(X,Y)=\sum_{i,j}P_{\mathfrak C}(X_i,Y_j)/rs;$
\item There exist $i\in\{1,\dots,r\}$ and $j\in\{1,\dots,s\}$
  such that ${P_{\mathfrak C}(X_i,Y_j)}\ge {P_{\mathfrak C}(X,Y)}.$
\end{enumerate}
\end{lemma}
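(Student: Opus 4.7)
The proof should be a straightforward measure-theoretic calculation followed by an averaging argument. I would attack (1) directly using formula \eqref{eq:probXY}, which says
\[ P_{\mathfrak C}(X,Y)=\frac{\mu(\Om_{\mathfrak C}(X,Y))}{\mu(X)\mu(Y)}, \]
and similarly $P_{\mathfrak C}(X_i,Y_j)=\mu(\Om_{\mathfrak C}(X_i,Y_j))/(\alpha\beta)$. The key observation is that since the decompositions $X=\sqcup_i X_i$ and $Y=\sqcup_j Y_j$ are disjoint, the product $X\times Y$ is the disjoint union of the sets $X_i\times Y_j$, and consequently
\[ \Om_{\mathfrak C}(X,Y)=\bigsqcup_{i,j}\Om_{\mathfrak C}(X_i,Y_j). \]
Taking Haar measure yields $\mu(\Om_{\mathfrak C}(X,Y))=\sum_{i,j}\mu(\Om_{\mathfrak C}(X_i,Y_j))$, and substituting into the formula above while using $\mu(X)\mu(Y)=rs\alpha\beta$ gives exactly
\[ P_{\mathfrak C}(X,Y)=\frac{1}{rs}\sum_{i,j}P_{\mathfrak C}(X_i,Y_j), \]
which is (1).

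For (2), I would observe that (1) expresses $P_{\mathfrak C}(X,Y)$ as the arithmetic mean of the $rs$ numbers $P_{\mathfrak C}(X_i,Y_j)$. Hence at least one term of the average must be at least as large as the mean itself, giving a pair $(i,j)$ with $P_{\mathfrak C}(X_i,Y_j)\ge P_{\mathfrak C}(X,Y)$.

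I do not foresee any real obstacle here; the only point worth double-checking is that the union $\Om_{\mathfrak C}(X,Y)=\sqcup_{i,j}\Om_{\mathfrak C}(X_i,Y_j)$ is genuinely disjoint, but this is immediate from the disjointness of the $X_i$ and $Y_j$ (a pair $(x,y)$ with $x\in X$, $y\in Y$ lies in a unique $X_i\times Y_j$). Measurability of each $\Om_{\mathfrak C}(X_i,Y_j)$ has already been established at the beginning of Section \ref{sec:2}, so additivity of $\mu$ applies without issue.
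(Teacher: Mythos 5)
Your proof is correct and is essentially the paper's own argument: decompose $X\times Y$ (and hence $\Omega_{\mathfrak C}(X,Y)$) into the disjoint closed pieces indexed by $(i,j)$, use additivity of $\mu$ together with $\mu(X)=r\alpha$, $\mu(Y)=s\beta$ to get (1), and deduce (2) by averaging. No issues.
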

\begin{proof}
Note that $X\times Y$ is the disjoint union of the $rs$ sets $X_i\times Y_j$, with $i=1,\dots,r$,  $j=1,\dots,s.$
Thus 
\[{P_{\mathfrak C}(X,Y)}=\frac{\mu(\Om_{\mathfrak C}(X,Y))}{\mu(X)\mu(Y)}=\frac{\sum_{i,j}\mu(\Om_{\mathfrak C}(X_i,Y_j))}{r\alpha s\beta}=\frac{\sum_{i,j}{P_{\mathfrak C}(X_i,Y_j)}}{rs}.\]
This proves (1). The other claim is a straightforward consequence of (1).
\end{proof}

Now we are ready to prove Theorem \ref{ovvissimo}. 

\begin{proof}[Proof of Theorem \ref{ovvissimo}] 
Assume that $G$ is  $\mathfrak C$-positive. 
	For $n\in \mathbb N,$ let
	$$X_n:=\left\{g\in G \mid {P_{\mathfrak C}(g,G)} \geq {1}/{n}\right\}.$$
	Let $\mathcal N$ be the family of all open normal subgroups of $G$. 
	 For any $N \in \mathcal N$, the set 
	 \[X_{n,N}=\{g\in G \mid P_{\mathfrak C}(gN,G/N) \geq 1/n\} \] 
	is a union of cosets of $N$, and in particular it is a closed subset of $G$. 
	Since, by Corollary \ref{cor:inf}, 
		 \[ P_{\mathfrak{C}}(g,G)=\inf_{N \in \mathcal N} P_{\mathfrak{C}}(gN,G/N),\] 
        it is clear that 
	\[X_n=\cap_{N\in \mathcal N}X_{n,N}.\]
	Therefore $X_n$ is a closed subset of $G.$
	  
	 Since $G$ is  $\mathfrak C$-positive, we have that 
	 \[ G= \bigcup_{n \in \mathbb N} X_n, \] 
	 so, by the Baire category theorem, there exists $n\in \mathbb N$ such that $X_n$ contains a non-empty open subset. Thus
	$G$ contains an open normal subgroup $N$ and an element $x$ such that
	${P_\mathfrak{C}}(g,G)\geq \eta>0$ for every $g\in xN$. Here $\eta$ is a suitable positive real number. It follows from Lemmas \ref{unionefinita} and \ref{unotanti} that
\[{P_\mathfrak{C}(G,G)}= \frac{1}{|G:N|}\sum_{tN \in G/N} P_\mathfrak{C}(tN,G)\ge \frac{P_\mathfrak{C}(xN,G)}{|G:N|} \ge  \frac{\eta}{|G:N|}. \qedhere\] 
	\end{proof}
Theorem \ref{soluble} now follows easily.

\begin{proof}[Proof of Theorem \ref{soluble}] 
Let $G$ be a profinite group with the prosoluble radical $R(G)$  and let $\mathfrak S$ be the class of the finite soluble groups. We want to prove that  the following conditions are equivalent:
	\begin{enumerate}
	\item $G$ is $\mathfrak S$-positive;
		\item $G$ is  $\mathfrak S$-bounded-positive;
			\item $P_{\mathfrak S}(G,G)>0 $ 
			\item $G$ is virtually prosoluble;
		\item $\mu(\Omega_{\mathfrak S}(G))=|G:R(G)|^{-1}>0.$ 
	\end{enumerate}
\noindent 	   $(4)$ is trivially equivalent to  $(5)$, as  $\Omega_{\mathfrak S}(G)$ is the prosoluble radical of $G$.

\noindent We  prove that (4) implies (2). Since $\langle g, R(G) \rangle$ is prosoluble for every $g \in G$, that is $ R(G) \leq \Omega_{\mathfrak S}(g,G)$, we have that if $G$ is  virtually prosoluble, then  $P_{\mathfrak S}(g,G)=\mu(\Omega_{\mathfrak S}(g,G))\geq \mu(R(G))=|G:R(G)|^{-1}.$ Hence $G$ is  $\mathfrak S$-bounded-positive. 

\noindent 	   $(2)$  trivially implies  $(1)$.

\noindent 	  $(1)$ implies $(3)$ by Theorem \ref{ovvissimo}.

\noindent 	  $(3)$ implies$ (4)$ by Wilson's result in  \cite{nwil}.
\end{proof}

\section{The class of finite nilpotent groups}

Here we will prove Theorem \ref{nilpot}. We start with some technical observations. The following lemma is a slight generalization of \cite[Lemma 1(a)]{nwil}.

\begin{lemma}\label{qnhall}
Let $G=NQ$ be a finite group, where $Q$ is a normal nilpotent $\pi$-subgroup and $N$ is a nilpotent subgroup generated by two elements $ u, v $. Let $R$ be  the $\pi^\prime$-Hall subgroup of $N.$
Then
$P_{\mathfrak N}(uQ,vQ)\leq |Q:C_Q(R)|^{-1},$
\end{lemma}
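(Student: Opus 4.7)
The plan is to exploit the coprime action of $R$ on $Q$ (since $\gcd(|R|,|Q|)=1$) together with the Schur--Zassenhaus theorem to pin down the $\pi'$-Hall of any nilpotent $\langle ua, vb\rangle$ as a common $Q$-conjugate of $R$. First I would decompose $N = P\times R$ into its $\pi$- and $\pi'$-Hall subgroups and write $u = p_u r_u$, $v = p_v r_v$ accordingly; since $N=\langle u,v\rangle$, we have $R=\langle r_u, r_v\rangle$.

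Assume $H := \langle ua, vb\rangle$ is nilpotent and let $H = A\times B$ where $A$ is the $\pi$-Hall and $B$ the $\pi'$-Hall. The projection $G\to G/Q$ sends $H$ onto $NQ/Q$ with kernel $H\cap Q\le A$, so $B$ is mapped isomorphically onto the $\pi'$-Hall of $NQ/Q$, namely the image of $R$. Consequently $|B|=|R|$ and $BQ=RQ$, so $B$ is a $\pi'$-Hall of $RQ$ and Schur--Zassenhaus yields $B = R^q$ for some $q\in Q$. The intrinsic $\pi'$-parts $\beta_u := (ua)_{\pi'}$ and $\beta_v := (vb)_{\pi'}$ (computed inside $G$, hence depending only on $ua$ and $vb$) lie in $B$ and project to $r_u, r_v$; uniqueness of the lift in $B\cong R$ forces $\beta_u = r_u^q$ and $\beta_v = r_v^q$ for the \emph{same} $q$.

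The main step is to show that this forces $(a,b)$ into a set of size at most $|Q|\cdot|C_Q(R)|$. The requirement that $\beta_u = r_u^q$ and $\beta_v = r_v^q$ share a common $q$ says that the cosets $q_aC_Q(r_u)$ and $q_bC_Q(r_v)$, determined by $a$ and $b$ respectively via $\beta_u, \beta_v$, must intersect. Combined with the nilpotency condition $[A,B]=1$, which imposes $[(ua)_\pi,\beta_v]=1$ and $[(vb)_\pi,\beta_u]=1$, a direct calculation, modeled on the abelian case in which everything reduces to a linear identity of the shape $(r_v-1)a = (r_u-1)b$ in the $R$-module $Q$, shows that $b$ is determined modulo $C_Q(R)$ once $a$ is fixed. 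This gives $|\Omega_{\mathfrak N}(uQ,vQ)|\le |Q|\cdot|C_Q(R)|$ and hence $P_{\mathfrak N}(uQ,vQ)\le |Q:C_Q(R)|^{-1}$.

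The main obstacle is that when $Q$ is non-abelian the commutation relations extracted from $[A,B]=1$ become non-linear in $Q$, and one cannot directly apply the linear count from the abelian case. I would handle this by induction on the nilpotency class of $Q$: take $Q_0 := [Q,Q]$ as a proper $G$-invariant subgroup, apply the inductive hypothesis to $G/Q_0$, and then combine with the elementary-abelian base case inside each $Q_0$-coset using the equality $|[Q,R]|=|[Q,r_u]\cdot[Q,r_v]|$ (with $|Q|=|C_Q(R)|\cdot|[Q,R]|$) to arrange that the case-by-case counts multiply back together into the required bound.
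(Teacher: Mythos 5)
Your opening move --- showing that the Hall $\pi'$-subgroup $B$ of a nilpotent $\langle ua,vb\rangle$ must equal $R^q$ for some $q\in Q$, with the $\pi'$-parts of $ua,vb$ equal to $r_u^q,r_v^q$ for a common $q$ --- is correct, and it is essentially the same observation the paper starts from (there obtained via conjugacy of Hall subgroups in the soluble group $G$). The gap is in the counting. Your central claim, that once $a$ is fixed the element $b$ is determined modulo $C_Q(R)$, is false in general, and the linear identity you invoke only pins $b$ down modulo the possibly much larger subgroup $C_Q(r_u)=\ker(r_u-1)$. A concrete counterexample: let $Q=Q_1\times Q_2$ be elementary abelian of order $p^2$, let $R=\langle r_u\rangle\times\langle r_v\rangle$ act coprimely with $r_u$ acting by a nontrivial scalar on $Q_1$ and trivially on $Q_2$, and $r_v$ the other way round, and take $N=R$, $u=r_u$, $v=r_v$. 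Then $C_Q(R)=1$, and a short computation with $\pi$- and $\pi'$-parts shows that $\langle r_ua,\,r_vb\rangle$ is nilpotent exactly when $a\in Q_1$ and $b\in Q_2$. So for each of the $p$ admissible values of $a$ there are $p=|C_Q(r_u)|>1=|C_Q(R)|$ admissible values of $b$; the total count $p^2=|Q|\cdot|C_Q(R)|$ is still correct, but only because most values of $a$ admit no $b$ at all. Hence the bound cannot be obtained fibrewise over $a$, which is the heart of your argument. The reduction from non-abelian to abelian $Q$ is also only gestured at: in the sub-problem over a coset of $[Q,Q]$ the two-generated group playing the role of $N$ need not be nilpotent, so the inductive hypothesis does not apply as stated without first re-basing at a pair already known to generate a nilpotent group.

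For comparison, the paper's proof replaces all of this by one global count. Since $\langle ua,vb\rangle$ is nilpotent, its Hall $\pi'$-subgroup is normal in it and equals $R^c$ for a well-defined coset $cN_Q(R)$; conjugation by $c\in Q$ maps each coset of $Q$ to itself, so the number of nilpotent pairs whose group has Hall $\pi'$-subgroup $R^c$ is independent of $c$; and for the pairs whose group contains $R$ itself, both $ua$ and $vb$ normalize $R$, hence $a,b\in N_Q(R)=C_Q(R)$ because $u,v\in N\le N_G(R)$. This yields at most $|Q:C_Q(R)|\cdot|C_Q(R)|^2=|Q|\,|C_Q(R)|$ pairs with no case analysis on the structure of $Q$. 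To salvage your approach you would have to count the pairs $(a,b)$ jointly rather than fibrewise, which essentially forces you back to this conjugation--partition argument.
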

\begin{proof}
	Note that $G$ is soluble. Let $C=N_Q(R)$; as $[C,R]\le Q\cap R=1$ it follows that $C=C_Q(R).$ Let $a, b \in  Q$ and $H=Q\langle ua, vb \rangle$. 
	Clearly  $\langle ua, vb \rangle$
	contains a $\pi^\prime$-Hall subgroup of $G,$ which must be conjugate to $R$. So $R^c\leq \langle ua, vb \rangle$ for some $c\in Q.$ Since, for $c \in Q,$ the map $x\mapsto x^c$ takes each coset of $Q$ to itself, there are as many nilpotent subgroups $\langle ua, vb \rangle$ containing $R^c$ as those containing $R$. Therefore the number of pairs $(a,b)$ with $\langle ua, vb \rangle$ nilpotent is $|Q:C|$ times the number $k$ of pairs $(a,b)$ with $\langle ua, vb \rangle$ nilpotent and containing $R.$ Let us give an upper bound for $k$. If $R\leq \langle ua, vb \rangle$ and $\langle ua, vb \rangle$ is nilpotent, then $u, v, ua, vb$ normalize $R,$ so that $a, b \in N_Q(R)=C.$ Thus $k\le |C|^2$ and therefore $P_{\mathfrak N}(uQ,vQ)\leq |Q|^{-2}|Q:C||C|^2=|Q:C|^{-1}.$ 
\end{proof}

\begin{cor}\label{modw}
	Let $G=NQ$ be a profinite group, where $Q$ is an open normal pronilpotent $\pi$-subgroup and $N=\langle u, v \rangle.$ If $N$ is pronilpotent and $R$ is the $\pi^\prime$-Hall subgroup of $N$, then
	$P_{\mathfrak N}(uQ,vQ)\leq |Q:C_Q(R)|^{-1}$.
\end{cor}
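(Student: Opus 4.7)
The plan is to reduce Corollary \ref{modw} to the finite case of Lemma \ref{qnhall} via Proposition \ref{prop:infXY}. Since $Q$ is open in $G$, the cosets $uQ$ and $vQ$ have positive Haar measure, so Proposition \ref{prop:infXY} gives
\[
P_{\mathfrak N}(uQ, vQ) \;=\; \inf_{M \in \mathcal N} P_{\mathfrak N}(uQM/M, vQM/M),
\]
where $\mathcal N$ denotes the family of open normal subgroups of $G$.

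For fixed $M \in \mathcal N$, I would write $\bar G = G/M$, $\bar Q = QM/M$, $\bar N = NM/M$, $\bar u = uM$, and $\bar v = vM$. Then $\bar G = \bar N \bar Q$, the image $\bar Q$ is a normal nilpotent $\pi$-subgroup (being a continuous image of $Q$), and $\bar N$ is nilpotent and topologically generated by $\bar u, \bar v$. Because the $\pi^\prime$-Hall subgroup of a pronilpotent group is the product of its Sylow $p$-subgroups for $p \notin \pi$, the image $RM/M$ is precisely the $\pi^\prime$-Hall subgroup of $\bar N$. Applying Lemma \ref{qnhall} yields
\[
P_{\mathfrak N}(\bar u \bar Q, \bar v \bar Q) \;\le\; |\bar Q : C_{\bar Q}(RM/M)|^{-1}.
\]

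The main obstacle is to pass this bound through the infimum and recognise the limit as $|Q : C_Q(R)|^{-1}$. For this I would observe that the preimage in $QM$ of $C_{\bar Q}(RM/M)$ equals $C_M(Q,R) \cdot M$, where $C_M(Q,R) := \{q \in Q : [q, R] \subseteq M\}$, so that
\[
|\bar Q : C_{\bar Q}(RM/M)|^{-1} \;=\; \frac{\mu(C_M(Q,R)\, M)}{\mu(QM)}.
\]
As $M$ shrinks through $\mathcal N$ (directed by reverse inclusion), the closed sets $QM$ and $C_M(Q,R)\, M$ form decreasing nets whose intersections are $Q$ and $C_Q(R)$ respectively; the latter identity uses $Q \cap C_M(Q,R)\, M = C_M(Q,R)$ via Dedekind's modular law, together with $\bigcap_{M \in \mathcal N} C_M(Q,R) = C_Q(R)$. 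By continuity of Haar measure from above on the compact group $G$, we obtain $\mu(QM) \to \mu(Q)$ and $\mu(C_M(Q,R)\, M) \to \mu(C_Q(R))$. Since $\mu(QM) \ge \mu(Q) > 0$ throughout, the ratio converges to $\mu(C_Q(R))/\mu(Q) = |Q : C_Q(R)|^{-1}$, and taking the infimum over $M \in \mathcal N$ completes the proof.
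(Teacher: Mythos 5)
Your proof is correct and follows essentially the same route as the paper: reduce to finite quotients via Proposition \ref{prop:infXY}, apply Lemma \ref{qnhall} there, and pass to the limit using $\bigcap_M\{q\in Q: [q,R]\subseteq M\}=C_Q(R)$. The paper streamlines the bookkeeping by taking the infimum only over open normal subgroups $M$ contained in $Q$ (so that $QM=Q$ and no Haar-measure continuity argument for the denominators is needed), but this is a cosmetic difference.
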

\begin{proof}
	Let $\eta=P_{\mathfrak N}(uQ,vQ)$ and let $\mathcal M$ be the set of open normal subgroups of $G$ contained in $Q.$ By Proposition \ref{prop:infXY},  we have $P_{\mathfrak N}(uQ/M,vQ/M)\ge \eta$  for every $M\in \mathcal M$. Hence, by Lemma \ref{qnhall}, $|Q:C_M|\leq 1/\eta,$ where
	$C_M=\{x\in Q \mid [x,R]\leq M\}.$
	We conclude $|Q:C_Q(R)|\leq \inf_{M\in \mathcal M}|Q:C_M|\leq 1/\eta.$
\end{proof}

We are now ready to prove the main result for nilpotent groups.

\begin{proof}[Proof of Theorem \ref{nilpot}]
	Let $G$ be a profinite group  and let $\mathfrak N$ be the class of finite nilpotent groups. We want to prove that following conditions are equivalent:
	\begin{enumerate}
    \item $G$ is $\mathfrak N$-positive;
		\item $G$ is  $\mathfrak N$-bounded-positive;
		\item $\mu(\Omega_{\mathfrak N}(G))=|G:Z_\infty(G)|^{-1}>0.$
		\item 	$G$ is finite-by-pronilpotent.
		\end{enumerate}
		\smallskip

\noindent We  prove that $(1)$ implies $(3).$
By Theorem \ref{ovvissimo}, ${P_\mathfrak{N}}(G,G)>0$ and therefore,  by Wilson's result \cite{nwil}, 
 $G$ contains an open pronilpotent normal subgroup $F.$
In particular the set $\pi$
 of all prime divisors of $|G:F|$ is finite. By \cite[Theorem A]{hyper}, the hypercenter of $G$ coincides with the intersection of the normalizers of the Sylow subgroups of $G.$ Since a $p$-Sylow subgroup of $G$ is normal in $G$ when $p\notin \pi,$ it suffices to prove that, for every $p \in \pi,$ the normalizer of a $p$-Sylow subgroup of $G$ has finite index in $G$, that is, $G$ contains only finitely many $p$-Sylow subgroups. So fix $p\in \pi$ and let $P$ be a $p$-Sylow subgroup of $F$. Since $P$ is normal in $G,$ it is contained in every 
$p$-Sylow subgroup of $G$, so it suffices to prove that  $G/P$ contains only finitely many $p$-Sylow subgroups. Let $T/P$ be a $p$-Sylow subgroup of $G/P$; then $T/P$ is finite, since $T/P$ is isomorphic to a Sylow subgroup of $G/F.$ Notice that we can replace $G$ with $G/P$, which is still  $\mathfrak N$-positive by Corollary \ref{cor:quoziente}; thus we can assume that $P=1$, $T$ is finite and we want to prove that $G$ contains only finitely many $p$-Sylow subgroups.
 
Fix $z\in T$. For $n\in \mathbb N,$ let
$$Y_n:=\left\{g\in F \mid {P_{\mathfrak N}(zg,G)} \geq \frac{1}{n}\right\}.$$
Let $\mathcal M$ be the family of all open normal subgroups of $G$ contained in $F$.
  For any $M \in \mathcal M$, the set 
  $$Y_{n,M}=\left\{g\in F \mid P_{\mathfrak N}(zgM,G/M)\} \geq \frac 1n\right\}$$
is a union of cosets of $M$, and in particular it is a closed subset of $F$. Since, by Corollary \ref{cor:inf},
		 \[ P_{\mathfrak{N}}(zg,G)=\inf_{M \in \mathcal N} P_{\mathfrak{C}}(zgM,G/M),\] 
        it is clear that 
	\[Y_n=\cap_{M\in \mathcal M}Y_{n,M}.\]
	Therefore $Y_n$ is a closed subset of $G.$
  By the Baire category theorem, there exists 
  an integer $\tilde n$ such that $Y_{\tilde n}$ contains a non-empty open subset. Thus $F$ contains an open normal subgroup $K$ and an element $m$ such that ${P_\mathfrak{N}}(g,G)\geq \epsilon$ for every $g\in zmK$ and for a suitable positive real number $\epsilon.$ By Lemma \ref{unotanti}, $P_{\mathfrak N}(zmK,G)\geq \epsilon.$ As $F$ is a union of finitely many, say $r$, cosets of $K$, it follows from Lemma \ref{unionefinita} that ${P_\mathfrak{N}}(zF,G)\geq \eta=\epsilon/r$  and there exists $w$ in $G$ such that $P_{\mathfrak N}(zF, wF)\geq \eta$. In particular, there exist $u \in Fz$ and $v\in Fw$ such
that $\langle u,v\rangle$ is pronilpotent. Let $\omega$ be the set of the prime divisors of $|F|$ and let $R$ be the $\omega^\prime$-Hall subgroup of 
 $\langle u, v\rangle$. By Corollary \ref{modw}, $|F:C_F(R)|\leq \eta^{-1}.$
 On the other hand $R$ contains a conjugate $\tilde z$ of $z,$ so
 $$|G:C_G(z)|=|G:C_G(\tilde z)| \leq |G:F||F:C_F(\tilde z)| \leq |G:F||F:C_F(R)| \leq \frac{|G:F|}{\eta}.$$
 Therefore any element of $T$ has finitely many conjugates. Hence $G$ contains only finitely many $p$-elements, and consequently, finitely many $p$-Sylow subgroups, as required. 
\smallskip

\noindent	To prove that $(3)$ implies $(2)$ observe that for every $g\in G$ we have $Z_\infty(G) \subseteq \Omega_{\mathfrak N}(g,G),$ whence $P_{\mathfrak N}(g,G)=\mu(\Omega_{\mathfrak N}(g,G))\geq \mu(Z_\infty(G))=|G:Z_\infty(G)|^{-1}.$

\noindent 	It is trivial that  $(2)$ implies $ (1)$.
\smallskip

We now prove that $Z_\infty(G)$ is open in a profinite group $G$ if and only if $G$ is finite-by-pronilpotent. Indeed, let $\gamma_\infty(G)$ denote the intersection of the lower central series of $G$. If $G$ is finite-by-pronilpotent then there is an open normal subgroup $N\leq G$ such that $N\cap\gamma_\infty(G)=1$ and it is not difficult to check that $N\leq Z_\infty(G)$. Conversely, if $Z_\infty(G)$ is open then $\gamma_\infty(G)$ is finite by a variant of the Baer theorem (see \cite{kurda}).
\end{proof}

\section{Solubilizers in profinite groups}\label{solva}
The aim of this section is to construct a nonprosoluble profinite group $G$ containing an element $g$ such that $\Omega_{\mathfrak S}(g,G)={\langle g\rangle}.$

For each natural number $t$, we recursively define a pair $(G_t,g_t),$ where $G_t$ is a finite group and $g_t\in G_t.$ Let $\alpha=(1,2,3),$ $\beta=(1,2,3,4,5) \in \alt(5).$
We set $G_0=\alt(5)$ and $g_0=\alpha.$
 Now assume that $(G_i, g_i)$ has been defined for every $i\leq t.$ Let $n_t=|G_t|$ and consider 
\[G_{t+1}=\alt(5) \wr G_t =\alt(5)^{n_t} \rtimes  G_t \]
 where  the wreath product is with respect to the regular action of $G_t.$ Let $M_{t+1}=\soc(G_{t+1})=\alt(5)^{n_t}$ be the socle of $G_{t+1}.$ An element $m \in M_{t+1}$ is a sequence $(y_x)_{x\in \alt(5)}.$  Let $T$ be a left transversal of $\langle g_t \rangle$ in $G_t$, with $1\in T.$ We define $m_{t+1}=(y_x)_{x\in  \alt(5)} \in M_{t+1}$ as follows:
$$y_x=\begin{cases}
	1 & \text{ if $x\notin T,$}\\
	\alpha & \text{ if $x=1,$}\\
	\beta & \text{ if $x\in T\setminus \{1\}$}.
\end{cases}$$ 
 Then we set 
 \[ g_{t+1}= m_{t+1} g_t. \]

\begin{lemma}\label{ht} Let $G_{t+1},g_{t+1}$ be defined as above and $M_{t+1}=\soc(G_{t+1}).$ Then $\Omega_{\mathfrak S}(g_{t+1},G_{t+1})\subseteq M_{t+1}\langle g_t \rangle.$
\end{lemma}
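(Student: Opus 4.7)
The plan is to prove the contrapositive: if $h \in G_{t+1} \setminus M_{t+1}\langle g_t\rangle$, then $\langle g_{t+1},h\rangle$ is not soluble. Write $h = mg$ with $m\in M_{t+1}$ and $g \in G_t$; the hypothesis becomes $g \notin \langle g_t\rangle$. The strategy is to exhibit two elements in the normal subgroup $\langle g_{t+1},h\rangle \cap M_{t+1}$ whose images under projection onto a single $\alt(5)$-coordinate generate all of $\alt(5)$, which immediately kills solubility.

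The first such element is $f := g_{t+1}^{d}$ with $d = |g_t|$; it lies in $M_{t+1}$ because $g_{t+1}$ projects to $g_t$ modulo $M_{t+1}$. The standard wreath-product formula
\[
(m_{t+1}g_t)^d \;=\; m_{t+1}\cdot {}^{g_t}\!m_{t+1}\cdot {}^{g_t^2}\!m_{t+1}\cdots {}^{g_t^{d-1}}\!m_{t+1}
\]
shows that the $x$-coordinate of $f$ equals, up to cyclic ordering, the product of the values $m_{t+1}(y)$ as $y$ ranges over the $\langle g_t\rangle$-orbit of $x$ in $G_t$ under the regular action. Since $T$ meets each such orbit in exactly one point, this product has a single non-trivial factor; hence $f$ is constant on each orbit, taking value $\alpha$ on the orbit $\langle g_t\rangle$ of $1$ and value $\beta$ on every other orbit (each of which is represented by some $t \in T \setminus \{1\}$).

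The second element is the conjugate $f' := hfh^{-1}$, which still lies in $M_{t+1}$ since $M_{t+1}$ is normal in $G_{t+1}$. A direct wreath-product computation shows
\[
f'(1) \;=\; m(1)\cdot f(g^{\ast})\cdot m(1)^{-1},
\]
where $g^{\ast}$ is $g$ or $g^{-1}$ depending on the sign convention for the regular action. Either way $g^{\ast} \notin \langle g_t\rangle$, so $g^{\ast}$ belongs to a coset represented by some $t \in T\setminus\{1\}$, and by the previous step $f(g^{\ast}) = \beta$. Hence $f'(1)$ is a conjugate of the $5$-cycle $\beta$ in $\alt(5)$.

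Projecting $\langle f, f'\rangle$ onto the first $\alt(5)$-factor via $\pi_1\colon M_{t+1}\to\alt(5)$, the image contains the $3$-cycle $\alpha$ and a $5$-cycle; its order is therefore divisible by $15$, and the only subgroup of $\alt(5)$ with this property is $\alt(5)$ itself. Consequently $\langle f, f'\rangle$ has $\alt(5)$ as a quotient, contradicting the supposed solubility of $\langle g_{t+1},h\rangle \supseteq \langle f, f'\rangle$. The only real subtlety is careful bookkeeping of wreath-product conventions, so that the indexing of $\langle g_t\rangle$-orbits by $T$ and the shift of the orbit of $1$ under conjugation by $g$ both come out correctly; once these coordinate computations are in place, the conclusion is immediate from the subgroup lattice of $\alt(5)$.
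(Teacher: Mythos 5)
Your argument is correct and is essentially the paper's own proof: both pass to $f=g_{t+1}^{|g_t|}\in M_{t+1}$, compute that its coordinates are $\alpha$ on the coset $\langle g_t\rangle$ and $\beta$ elsewhere (using that the left transversal $T$ meets each coset once), conjugate by the hypothetical element $mg$ with $g\notin\langle g_t\rangle$, and project to a single $\alt(5)$-coordinate where an element of order $3$ and one of order $5$ force the image to be all of $\alt(5)$. The only differences are cosmetic (contrapositive phrasing, projecting at the coordinate $1$ rather than at an arbitrary $x\in\langle g_t\rangle$).
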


\begin{proof} Let  $h_{t+1}=g_{t+1}^{|g_t|}\in M_{t+1}.$ We will prove that $\Omega_{\mathfrak S}(h_{t+1},G_{t+1})\subseteq M_{t+1}\langle g_t \rangle,$ then the result follows from the fact that $\Omega_{\mathfrak S}(g_{t+1},G_{t+1})
\subseteq \Omega_{\mathfrak S}(h_{t+1},G_{t+1}).$

For every $x\in G_t$ consider the projection $\pi_x: M_{t+1}\cong \alt(5)^{n_t} \to \alt(5).$ 
Set $\gamma_t=|g_t|$ and note that 
 \[ h_{t+1}=(m_{t+1} g_{t})^{\gamma_t}=m_{t+1} (m_{t+1})^{g_t^{-1}}(m_{t+1})^{g_t^{-2}}\cdots (m_{t+1})^{g_t^{-\gamma_t+1}}\in M_{t+1}. \] 
 Moreover, 
if $x \in \langle g_t \rangle$, then $x g_t^{i}=1$ for one (and only one) index $i \in \{0, \dots,  \gamma_t -1\}$, while if $x \notin \langle g_t \rangle$, 
then $x g_t^{i} \in T$ for one (and only one) index $i \in \{0, \dots,  \gamma_t -1\}$.
Therefore 
$$(h_{t+1})^{\pi_x}=\begin{cases}
\alpha&\text { if $x \in \langle g_t \rangle$},\\
\beta&\text { otherwise}.
	\end{cases}
$$
Now assume that $\rho=mz \in \Omega_{\mathfrak S}(h_{t+1}),$ with $m=(u_x)_{x\in G_t}\in M_{t+1}$ and $z\in G_t.$ Assume, by contradiction, that $z\notin \langle g_t\rangle$. If $x \in \langle g_t \rangle,$ then $xz^{-1}\notin \langle g_t \rangle$ and consequently
$$\left((h_{t+1})^\rho \right)^{ \pi_x}= \left((h_{t+1})^{\pi_{xz^{-1}}} \right)^{u_{xz^{-1}}}=\beta^{u_{xz^{-1}}}.$$

In particular $$\langle h_{t+1}, (h_{t+1})^\rho\rangle^{\pi_x}=\langle (h_{t+1})^{\pi_x}, (h_{t+1})^{\rho \pi_x}\rangle=\langle \alpha, \beta^{u_{xz^{-1}}}\rangle=\alt(5),
$$
since no proper subgroup of $\alt(5)$ can contain an element of order $3$ and and element of order $5$. It follows that $\langle h_{t+1}, (h_{t+1})^\rho\rangle$ is not soluble. Hence  $\langle h_{t+1}, \rho\rangle$ is not soluble, in contradiction with $\rho \in \Omega_{\mathfrak S}(h_{t+1}).$
\end{proof}

\begin{proof}[Proof of Proposition \ref{prop:sol}] Let $G_{t+1},g_{t+1}$ and $M_{t+1}$ be defined as above.
For every $t\in \mathbb N,$ we have an epimorphism $\phi_{t+1}: G_{t+1}\to G_t$. Let $G$ be the inverse limit of the inverse system $(G_t,\phi_t)_{t\in \mathbb N}.$ Recall that $G$ is a profinite group which can be identified with the subgroup of the cartesian product $\prod_{t\in \mathbb N}G_t$ consisting of the elements $(z_t)_{t\in \mathbb N}$ with $z_t=z_{t+1}^{\phi_{t+1}}$ for every $t \in \mathbb N.$ 
 We set $g=(g_t)_{t\in \mathbb N}$ and note that, under this identification,  $g$  is an element of $G.$ 

There is a descending chain $(N_t)_{t\in \mathbb N}$ of open normal subgroups of $G$ such that $G/N_t \cong G_t$ for every $t \in \mathbb N$ and $\cap_{t\in \mathbb N}N_t=1.$ Moreover $N_t/N_{t+1}\cong M_{t+1}\cong 	\soc(G_{t+1}).$
Suppose $x \in \Omega_{\mathfrak S}(g,G).$ Taking into account Lemma \ref{ht} for every $t\in \mathbb N$ write
$$\begin{aligned}xN_{t+1}& \in \Omega_{\mathfrak S}(gN_{t+1},G/N_{t+1})=\Omega_{\mathfrak S}(g_{t+1}N_{t+1},G/N_{t+1})\\&\leq \soc(G/N_{t+1})\langle g_tN_{t+1}\rangle = N_t\langle g\rangle/N_{t+1}.	\end{aligned}$$

We conclude that 
$x \in \cap_{t\in \mathbb N}N_t\langle g\rangle={\langle g\rangle}.$
This proves that $\Omega_{\mathfrak S}(g,G)$ is contained in, and hence equal to, the (closed) subgroup  generated by $g$. 
\end{proof}

\end{document}